\tikzset{desc/.style=auto}
\tikzset{diagmat/.style={matrix of math nodes, row sep=3em, column sep=3em, text height=1.5ex, text depth=0.25ex}}
\tikzset{diagpath/.style={->, font=\scriptsize}}
\pgfplotsset{my style/.append style={axis x line=middle, axis y line=
middle, axis equal }}
\def\centerarc[#1](#2)(#3:#4:#5){ \draw[#1] ($(#2)+({#5*cos(#3)},{#5*sin(#3)})$) arc (#3:#4:#5); }
\newtheorem{Thm}{Theorem}[section]
\newtheorem{Lem}[Thm]{Lemma}
\newtheorem{Cor}[Thm]{Corollary}
\newtheorem{Prop}[Thm]{Proposition}
\theoremstyle{definition}           
\numberwithin{equation}{section}
\newcommand{\Z}{\mathbb{Z}}
\newcommand{\N}{\mathbb{N}}
\newcommand{\df}{\colon}
\newcommand{\cA}{{\mathcal A}}
\newcommand{\cB}{{\mathcal B}}
\newcommand{\cO}{{\mathcal O}}
\newcommand{\cP}{{\mathcal P}}
\newcommand{\cU}{{\mathcal U}}
\newcommand{\bd}{\mathbf{d}}
\newcommand{\md}{\operatorname{mod}}
\newcommand{\gldim}{\operatorname{gl.dim}}
\newcommand{\dimv}{\underline{\dim}}
\newcommand{\tp}{\operatorname{top}}
\newcommand{\Hom}{\operatorname{Hom}}
\newcommand{\Ext}{\operatorname{Ext}}
\newcommand{\ext}{\operatorname{ext}}
\newcommand{\eend}{\operatorname{end}}
\newcommand{\End}{\operatorname{End}}
\newcommand{\Ima}{\operatorname{Im}}
\newcommand{\Ker}{\operatorname{Ker}}
\newcommand{\Coker}{\operatorname{Coker}}
\newcommand{\irr}{\operatorname{Irr}}
\newcommand{\bsm}{\begin{smallmatrix}}
\newcommand{\esm}{\end{smallmatrix}}
\newcommand{\bbm}{\begin{matrix}}
\newcommand{\ebm}{\end{matrix}}
\newcommand{\bbsm}{\left(\begin{smallmatrix}}
\newcommand{\besm}{\end{smallmatrix}\right)}
\newcommand{\BM}{\left(\begin{matrix}}
\newcommand{\EM}{\end{matrix}\right)}
\newcommand{\GL}{\operatorname{GL}}
\begin{document}

\date{June 10, 2024}

\title[Semicontinuous maps on module varieties]
{Semicontinuous maps on module varieties}


\author{Christof Gei{\ss}}
\address{Christof Gei{\ss}\newline
Instituto de Matem\'aticas\newline
Universidad Nacional Aut{\'o}noma de M{\'e}xico\newline
Ciudad Universitaria\newline
04510 M{\'e}xico D.F.\newline
M{\'e}xico}
\email{christof.geiss@im.unam.mx}

\author{Daniel Labardini-Fragoso}
\address{Daniel Labardini-Fragoso\newline
Instituto de Matem\'aticas\newline
Universidad Nacional Aut{\'o}noma de M{\'e}xico\newline
Ciudad Universitaria\newline
04510 M{\'e}xico D.F.\newline
M{\'e}xico}
\email{labardini@im.unam.mx}

\author{Jan Schr\"oer}
\address{Jan Schr\"oer\newline
Mathematisches Institut\newline
Universit\"at Bonn\newline
Endenicher Allee 60\newline
53115 Bonn\newline
Germany}
\email{schroer@math.uni-bonn.de}

\subjclass[2010]{Primary 14M99, 16E30, 16G70; 
Secondary 16G60, 13F60}










\begin{abstract}
We study semicontinuous maps on varieties of modules over finite-dimensional algebras. 
We prove that truncated Euler maps are upper or lower semicontinuous.
This implies that $g$-vectors and $E$-invariants of modules
are upper semicontinuous.
We also discuss inequalities of generic values of some
upper semicontinuous maps.
\end{abstract}

\maketitle
\setcounter{tocdepth}{1}
\tableofcontents
\parskip2mm


\section{Introduction and main results}\label{sec:intro}


\subsection{Irreducible components of module varieties}
Let $A$ be a finite-dimensional $K$-algebra, where $K$ is
an algebraically closed field.
Let $\md(A)$ be the category of finite-dimensional
left $A$-modules.

For $d \ge 0$ let $\md(A,d)$ be the affine variety of
$A$-modules with dimension $d$.
The general linear group $G_d := \GL_d(K)$ acts on $\md(A,d)$ by
conjugation.
The orbits correspond to the isomorphism classes of $d$-dimensional
modules.
The orbit of $M \in \md(A,d)$ is denoted by $\cO_M$.
Let
$\irr(A,d)$ be the set of irreducible components of 
$\md(A,d)$, and let $\irr(A)$ be the union of the sets $\irr(A,d)$  where $d$ runs over all dimensions.
We say that $Z \in \irr(A,d)$ contains a dense orbit if there is
some $M \in \md(A,d)$ with $Z = \overline{\cO_M}$.

\subsection{Main results}
For $M,M' \in \md(A)$ and $i \ge 0$ let
$$
\ext_A^i(M,M') := \dim \Ext_A^i(M,M').
$$
We have $\Ext_A^0(M,M') = \Hom_A(M,M')$.
Let
$\hom_A(M,M') := \ext_A^0(M,M')$.

For $d,d',i \ge 0$ define
\begin{align*}
\ext_A^i(-,?)&\df \md(A,d) \times \md(A,d') \to \Z,\quad
(M,M') \mapsto \ext_A^i(M,M'),
\\
\ext_A^i(-)&\df \md(A,d) \to \Z,\quad
M \mapsto \ext_A^i(M,M).
\end{align*}
It is well known
that the above maps are upper semicontinuous.

\begin{Thm}\label{thm:main1}
For $d,d',t \ge 0$ the map
$$
\eta_t(-,?)\df \md(A,d) \times \md(A,d') \to \Z,\quad
(M,M') \mapsto \sum_{i=0}^t (-1)^i \ext_A^i(M,M')
$$
is 
upper semicontinuous for $t$ even and lower semicontinuous for
$t$ odd.
\end{Thm}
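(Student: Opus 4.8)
The plan is to realize $\eta_t$ as an alternating sum of ranks of matrices whose entries depend polynomially (in fact linearly, after fixing coordinates) on the pair $(M,M')$, and then to exploit the standard fact that the rank of a matrix with polynomial entries is a lower semicontinuous function, together with the fact that a truncated Euler characteristic can be rewritten so that the leading correction term has a controlled sign. Concretely, for a fixed pair of dimension vectors, one builds a complex computing $\Ext_A^\bullet(M,M')$ that is \emph{functorial in a $G_d\times G_{d'}$-equivariant way over the varieties $\md(A,d)\times\md(A,d')$}: take a fixed projective resolution-type setup. Since $A$ is finite-dimensional, one can use the bar-type or standard projective presentation: choosing a minimal projective presentation is not possible uniformly, but one can fix, for each pair, a complex of trivial vector bundles $C^0 \to C^1 \to C^2 \to \cdots$ whose differentials $d^i = d^i(M,M')$ vary polynomially and whose cohomology at spot $i$ is $\Ext_A^i(M,M')$ for $0\le i\le t$ (the complex need only be correct up to degree $t$, so one truncates the Koszul/bar complex of $A$ accordingly, or uses a fixed length-$(t{+}1)$ segment of projective resolutions of the simples glued together). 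Such a construction appears in the work on module varieties (Crawley-Boevey–Schröer style); I would cite or reproduce it.

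The key computation is then the standard identity for a complex of finite-dimensional vector spaces $(C^\bullet, d^\bullet)$ of fixed dimensions $c_i := \dim C^i$: for the cohomology $H^i$ one has $\dim H^i = c_i - \rk d^{i-1} - \rk d^i$ (with $d^{-1}=0$). Therefore
\begin{align*}
\eta_t(M,M') &= \sum_{i=0}^t (-1)^i \dim H^i
= \sum_{i=0}^t (-1)^i \left(c_i - \rk d^{i-1}(M,M') - \rk d^i(M,M')\right)\\
&= \sum_{i=0}^t (-1)^i c_i \;-\; \sum_{i=0}^t (-1)^i \rk d^i(M,M') \;-\; \sum_{i=0}^t (-1)^i \rk d^{i-1}(M,M').
\end{align*}
Reindexing the last sum and collecting terms, the contributions of $\rk d^i$ for $0 \le i \le t-1$ cancel in pairs, and one is left with
\begin{align*}
\eta_t(M,M') &= (\text{constant}) \;+\; (-1)^{t+1}\rk d^t(M,M').
\end{align*}
Here the constant $\sum_{i=0}^t(-1)^i c_i$ depends only on the fixed dimension vectors, not on the point $(M,M')$. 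Thus $\eta_t$ is, up to an additive constant, equal to $(-1)^{t+1}$ times the rank of a matrix varying polynomially over $\md(A,d)\times\md(A,d')$.

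Finally, $\rk d^t(-,?)$ is lower semicontinuous: the locus where the rank is $\ge r$ is open, being the non-vanishing locus of the $r\times r$ minors of $d^t$, which are polynomial functions. Hence $(-1)^{t+1}\rk d^t$ is lower semicontinuous when $t{+}1$ is even, i.e. $t$ odd, and upper semicontinuous when $t$ is even; adding a constant changes nothing. This gives exactly the claimed semicontinuity of $\eta_t$. The main obstacle — and the step I would spend the most care on — is the first one: producing the complex $C^\bullet$ with polynomially varying differentials whose cohomology in degrees $0,\dots,t$ computes $\Ext_A^i(M,M')$ uniformly over the module varieties. The cleanest route is to fix, once and for all, minimal projective resolutions $P_\bullet^{(j)}$ of each simple $A$-module $S_j$ up to homological degree $t$ (these exist and are finite-dimensional since $A$ is finite-dimensional and we only need finitely many terms), assemble from the module structure maps of $M$ a projective "resolution up to degree $t$" whose terms have dimensions determined by $\dimv M$ alone and whose differentials are built polynomially from the action matrices of $M$, then apply $\Hom_A(-,M')$ and note that the resulting cochain differentials depend polynomially on the action matrices of both $M$ and $M'$; its cohomology in degrees $\le t$ is $\Ext_A^i(M,M')$ by the comparison theorem. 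Once that functorial complex is in hand, the rest is the bookkeeping above.
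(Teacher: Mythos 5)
Your proposal is correct and is essentially the paper's own argument: the paper also uses the (truncated) bar/standard complex to get a complex of trivial bundles with polynomially varying differentials, telescopes the alternating sum down to a constant plus $(-1)^{t+1}$ times a single rank (phrased there as $\pm k_{t+1}^{M,M'}$ plus a constant, via $k_i = c_i - \operatorname{rk}$), and concludes from the standard semicontinuity of kernel dimension/rank. The only caution is your fallback construction gluing minimal projective resolutions of the simples: minimal resolutions do not vary algebraically over the module variety, so you should stick with the bar-complex route, which works without issue.
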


We call $\eta_t(-,?)$ a \emph{truncated Euler map}. 

If $\gldim(A) = t$,
then Theorem~\ref{thm:main1} says that 
$\eta_t(-,?) = \eta_{t+1}(-,?)$ is upper and lower semicontinuous.
This implies the well known result that in this case
the value $\eta_t(M,M')$ depends only on the dimension
vectors of $M$ and $M'$. 
The associated bilinear form $\Z^n \times \Z^n \to \Z$ is the 
\emph{Euler form} of $A$.
(Here $n$ is the number of simple $A$-modules, up to isomorphism.)

For $Z \in \irr(A,d)$ and $Z' \in \irr(A,d')$ let 
$\ext_A^i(Z,Z')$ (resp. $\ext_A^i(Z)$) be the generic value
of $\ext_A^i(-,?)$ (resp. $\ext_A^i(-)$) on $Z \times Z'$ (resp. $Z$).
Thus
\begin{align*}
\ext_A^i(Z,Z') &= \min\{ \ext_A^i(M,M') \mid (M,M') \in Z \times Z' \},
\\
\ext_A^i(Z) &= \min\{ \ext_A^i(M,M) \mid M \in Z \}.
\end{align*}
Let
$\hom_A(Z,Z') := \ext_A^0(Z,Z')$ and
$\eend_A(Z) := \ext_A^0(Z)$.

For $Z \in \irr(A,d)$ and $Z' \in \irr(A,d')$ let $\eta_t(Z,Z')$ be
the generic value of $\eta_t(-,?)$ on $Z \times Z'$.
The upper semicontinuity of $\ext_A^i(-,?)$ implies that
$$
\eta_t(Z,Z') = \sum_{i=0}^t (-1)^i \ext_A^i(Z,Z').
$$
Theorem~\ref{thm:main1} implies that this generic value is in fact the minimal (for $t$ even) or maximal (if $t$ is odd) value of 
$\eta_t(-,?)$ on $Z \times Z'$.

For
$M,M' \in \md(A)$, $i \ge 1$ and $j \ge 0$ we have 
$$
\Ext_A^i(\Omega^j(M),M') \cong \Ext_A^{i+j}(M,M')
$$ 
where $\Omega^j(M)$ is the  $j$-th syzygy module of
$M$.
For the definition of $\Omega^j(-)$ we use minimal projective resolutions.

\begin{Cor}\label{cor1:main1}
For $j \ge 0$ the map
$$
\hom_A(\Omega^j(-),?)\df \md(A,d) \times \md(A,d') \to \Z,\quad
(M,M') \mapsto \hom_A(\Omega^j(M),M')
$$
is upper semicontinuous.
\end{Cor}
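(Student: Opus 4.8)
The case $j=0$ is precisely the well-known upper semicontinuity of $\hom_A(-,?)$ recalled before Theorem~\ref{thm:main1}, so assume from now on that $j\ge1$. The plan is to rewrite the map $(M,M')\mapsto\hom_A(\Omega^j(M),M')$ as a non-negative combination of truncated Euler maps and then invoke Theorem~\ref{thm:main1}. Fix $(M,M')\in\md(A,d)\times\md(A,d')$ and let $\cdots\to P_1\to P_0\to M\to0$ be the minimal projective resolution of $M$, with syzygies $\Omega^k(M)$ and $\Omega^0(M)=M$. For $1\le k\le j$ I would apply $\Hom_A(-,M')$ to the short exact sequence $0\to\Omega^k(M)\to P_{k-1}\to\Omega^{k-1}(M)\to0$; using $\Ext_A^1(P_{k-1},M')=0$ and the dimension shift $\Ext_A^1(\Omega^{k-1}(M),M')\cong\Ext_A^k(M,M')$ quoted before Corollary~\ref{cor1:main1}, the resulting exact sequence gives
\[
\hom_A(\Omega^k(M),M')=\hom_A(P_{k-1},M')-\hom_A(\Omega^{k-1}(M),M')+\ext_A^k(M,M').
\]
Because the resolution is minimal, the multiplicity of the indecomposable projective $P(S_i)$ in $P_{k-1}$ equals $\ext_A^{k-1}(M,S_i)$, whence $\hom_A(P_{k-1},M')=\sum_{i=1}^n c_i\,\ext_A^{k-1}(M,S_i)$, where $c_i$ is the Jordan--H\"older multiplicity of $S_i$ in $M'$, a non-negative integer that is constant on each connected component of $\md(A,d')$.

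Next I would solve this recursion. Writing $h_k:=\hom_A(\Omega^k(M),M')$ and using $h_0=\hom_A(M,M')=\ext_A^0(M,M')$, one gets
\[
\hom_A(\Omega^j(M),M')=(-1)^{j-1}\sum_{i=1}^n c_i\,\eta_{j-1}(M,S_i)+(-1)^j\,\eta_j(M,M'),
\]
where the last summand collects the alternating tail $\sum_{k=0}^j(-1)^{j-k}\ext_A^k(M,M')$ and, for each $i$, the coefficient of $c_i$ collects $\sum_{k=1}^j(-1)^{j-k}\ext_A^{k-1}(M,S_i)=(-1)^{j-1}\eta_{j-1}(M,S_i)$. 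This is just a bookkeeping identity for each fixed pair $(M,M')$, so no semicontinuity input is needed here.

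Finally, to conclude: by Theorem~\ref{thm:main1}, for each $i$ the map $M\mapsto\eta_{j-1}(M,S_i)$ is upper semicontinuous when $j-1$ is even and lower semicontinuous when $j-1$ is odd, so $(-1)^{j-1}\eta_{j-1}(-,S_i)$ is upper semicontinuous in every case; likewise $(-1)^j\eta_j(-,?)$ is upper semicontinuous. A non-negative combination of upper semicontinuous maps, with the locally constant coefficients $c_i$ (checking on the connected components of $\md(A,d)\times\md(A,d')$ on which they are constant), is again upper semicontinuous, which is the claim. The step I expect to be the crux is exactly this sign bookkeeping: the naive expansion of $\hom_A(\Omega^j(M),M')$ involves terms $-\ext_A^i(M,M')$, which are only lower semicontinuous, and the argument works only because after regrouping the surviving pieces are precisely the truncated Euler maps $\eta_{j-1}(-,S_i)$ and $\eta_j(-,?)$, whose parities are matched by the prefactors $(-1)^{j-1}$ and $(-1)^j$ so that Theorem~\ref{thm:main1} applies in the right direction.
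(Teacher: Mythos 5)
Your proposal is correct and follows essentially the same route as the paper: the same syzygy recursion, the same use of $[P_{k-1}:P(i)]=\ext_A^{k-1}(M,S(i))$ for the minimal resolution, and the same regrouping into $(-1)^{j-1}\eta_{j-1}(-,S(i))$ and $(-1)^j\eta_j(-,?)$ so that Theorem~\ref{thm:main1} applies with the right parity (this is exactly the content of Lemmas~\ref{lem:projmult4}, \ref{lem:projmult1}, \ref{lem:projmult2} and Corollary~\ref{cor:projmult3}). The sign bookkeeping you identify as the crux is indeed how the paper's argument is organized.
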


Let $S(1),\ldots,S(n)$ be the simple $A$-modules, up to isomorphism.
For $M \in \md(A)$ and $1 \le i \le n$ let
$$
g_i := g_i(M) := - \hom_A(M,S(i)) + \ext_A^1(M,S(i)).
$$
Then $g(M) := (g_1,\ldots,g_n)$ is the \emph{$g$-vector} of $M$.

\begin{Cor}\label{cor2:main1}
For $d \ge 0$ and $1 \le i \le n$
the map 
$$
g_i(-)\df \md(A,d) \to \Z,\quad
M \mapsto g_i(M)
$$
is upper semicontinuous.
\end{Cor}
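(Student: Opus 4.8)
The plan is to recognize $g_i(-)$ as, up to sign, a truncated Euler map with its second argument frozen, and then to quote Theorem~\ref{thm:main1}. Note that one cannot argue directly from the upper semicontinuity of $\hom_A(-,S(i))$ and of $\ext_A^1(-,S(i))$: the first of these enters the definition of $g_i$ with a minus sign, so separately it would only yield lower semicontinuity, and the two terms pull in opposite directions. Controlling this cancellation is exactly what Theorem~\ref{thm:main1} is for, so the whole content of the corollary will sit in that theorem.

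Concretely, set $d' := \dim S(i)$ and regard $S(i)$ as a (closed) point of $\md(A,d')$. The first step is the elementary identity
$$
g_i(M) \;=\; -\hom_A(M,S(i)) + \ext_A^1(M,S(i)) \;=\; -\bigl(\ext_A^0(M,S(i)) - \ext_A^1(M,S(i))\bigr) \;=\; -\eta_1(M,S(i)),
$$
valid for every $M \in \md(A,d)$. The second step is to apply Theorem~\ref{thm:main1} with $t=1$: since $1$ is odd, $\eta_1(-,?)\colon \md(A,d)\times\md(A,d') \to \Z$ is lower semicontinuous, hence $-\eta_1(-,?)$ is upper semicontinuous on $\md(A,d)\times\md(A,d')$. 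The third step is to restrict to the slice through $S(i)$: the map $\iota\colon \md(A,d)\to\md(A,d)\times\md(A,d')$, $M\mapsto(M,S(i))$, is a morphism of affine varieties and in particular continuous, and precomposing an upper semicontinuous $\Z$-valued function with a continuous map keeps it upper semicontinuous (the preimage of $\{f\ge n\}$ is closed). Therefore $g_i(-) = (-\eta_1(-,?))\circ\iota$ is upper semicontinuous on $\md(A,d)$.

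I do not expect any genuine obstacle: the only point deserving a word is that freezing one argument of a semicontinuous map on a product preserves semicontinuity, which is immediate since the slice inclusion $\iota$ is continuous (indeed a closed immersion). In particular Corollary~\ref{cor1:main1} is not needed here, because no syzygy modules occur in the definition of the $g$-vector; only the case $t=1$ of Theorem~\ref{thm:main1} is used.
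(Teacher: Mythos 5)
Your argument is correct and is essentially identical to the paper's proof (Proposition~\ref{prop:gvectorusc}): both identify $g_i(M) = -\eta_1(M,S(i))$, invoke Theorem~\ref{thm:main1} with $t=1$ to get lower semicontinuity of $\eta_1(-,?)$, and then freeze the second argument at $S(i)$ (the paper cites Lemma~\ref{lem:semicont1} for this last step, which you instead verify directly via continuity of the slice inclusion).
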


Corollary~\ref{cor2:main1} is already proved in our abandoned preprint \cite{GLFS20}.

For $M,N \in \md(A)$ let 
$$
E(M,N) := \hom_A(N,\tau(M))
\text{\quad and \quad}
E(M) := E(M,M)
$$ be the
\emph{$E$-invariant} of $(M,N)$ and $M$, respectively.
Here $\tau$ is the Auslander-Reiten translation for $\md(A)$.
(For an introduction to Auslander-Reiten theory we refer to
\cite{ASS06,ARS97,R84}.)

$E$-invariants and
$g$-vectors appear in Derksen, Weyman and Zelevinsky's
seminal work \cite{DWZ10} on the additive categorification of Fomin-Zelevinsky cluster algebras
via Jacobian algebras.
They also feature in $\tau$-tilting theory \cite{AIR14}.

\begin{Cor}\label{cor3:main1}
For $d,d' \ge 0$
the maps 
\begin{align*}
E(-,?)&\df \md(A,d) \times \md(A,d') \to \Z,\quad
(M,M') \mapsto E(M,M'),
\\
E(-)&\df \md(A,d) \to \Z,\quad
M \mapsto E(M)
\end{align*}
are upper semicontinuous.
\end{Cor}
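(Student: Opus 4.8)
The plan is to reduce the $E$-invariant to a truncated Euler map via the Auslander–Reiten formula, and then invoke Theorem~\ref{thm:main1}. Recall the Auslander–Reiten duality $\Hom_A(N,\tau(M)) \cong D\overline{\Ext}_A^1(M,N)$, where $D$ is the $K$-dual and $\overline{\Ext}_A^1$ is the Ext group modulo morphisms factoring through projectives; more usefully, there is the functorial identity
$$
\hom_A(N,\tau(M)) = \ext_A^1(M,N) - \hom_A(M,N) + \sum_{i=1}^n [M:S(i)]\cdot \hom_A(P(i),N),
$$
where $[M:S(i)]$ is the multiplicity of $S(i)$ in the top of a minimal projective presentation, i.e. the $i$-th coordinate of the "projective presentation vector" of $M$, and $P(i)$ is the projective cover of $S(i)$. (This is the standard expression of $E(M,N)$ in terms of $g$-vectors, cf. \cite{DWZ10,AIR14}: in fact $\hom_A(P(i),N)=\dim e_i N$ depends only on $\dimv N$, so the last sum is a linear function of $\dimv N$ weighted by the presentation vector of $M$.)

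First I would make this identity precise and functorial over $\md(A,d)\times\md(A,d')$: the term $\hom_A(P(i),N) = \dim\Hom_A(P(i),N)$ equals $\dim e_iN$, which is a \emph{constant} on $\md(A,d')$ (it depends only on $d'$), so it contributes nothing to semicontinuity issues once we control the coefficient $[M:S(i)]$. Next, observe that the map $M\mapsto[M:S(i)]=\dim\Hom_A(M,S(i))-\dim\Ext_A^1(M,S(i))+\dim\Ext_A^1(\Omega(M),S(i))$... more directly, $[M:S(i)] = \hom_A(M,S(i)) - \eta_1(M,S(i))$... Actually the cleanest route: by definition $g_i(M) = -\hom_A(M,S(i)) + \ext_A^1(M,S(i))$, and the presentation vector is related to $-g_i$. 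Combining, one gets
$$
E(M,N) = \eta_1(M,N) + \sum_{i=1}^n g_i(M)\cdot (\dim e_iN) + c(d,d')
$$
for a suitable constant correction $c(d,d')$ (absorbing $\hom_A(P(i),S(j))$ bookkeeping), or an analogous exact formula. Here $\eta_1(M,N) = \hom_A(M,N) - \ext_A^1(M,N)$ is \emph{lower} semicontinuous by Theorem~\ref{thm:main1} with $t=1$, so $-\eta_1(M,N)$... wait — $E(M,N)=\hom_A(N,\tau M)$ should be upper semicontinuous, so I must get the signs so that $E$ is a sum of $\ext_A^1(M,N)$ (upper s.c.), $-\hom_A(M,N)$ (lower s.c.\ negated is upper s.c.? no) — I would instead write $E(M,N) = -\eta_1(M,N) + (\text{linear in }\dimv N\text{ with coefficients }g_i(M)) + \text{const}$, using that $-\eta_1$ is upper semicontinuous.

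Then the proof of upper semicontinuity of $E(-,?)$ runs as follows: $-\eta_1(-,?)$ is upper semicontinuous by Theorem~\ref{thm:main1}; each $g_i(-)$ is upper semicontinuous by Corollary~\ref{cor2:main1}; $\dim e_iN \ge 0$ is a nonnegative constant on $\md(A,d')$; and a nonnegative-constant multiple of an upper semicontinuous function plus an upper semicontinuous function plus a constant is upper semicontinuous. (One must check the coefficient $\dim e_iN$ is genuinely constant, not merely semicontinuous, which holds since $\dim e_iN$ is determined by $d'\in\N^n$; otherwise the product of two semicontinuous functions need not be semicontinuous.) For $E(-)$, restrict along the diagonal-type map $\md(A,d)\to\md(A,d)\times\md(A,d)$, $M\mapsto(M,M)$, which is a morphism of varieties, so $E(-) = E(-,?)\circ\Delta$ is a composition of an upper semicontinuous map with a continuous one, hence upper semicontinuous. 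The main obstacle I anticipate is pinning down the exact constant/correction term and verifying that the "extra" terms relating $\tau$, syzygies, and the projective presentation really do reduce to (i) the truncated Euler form $\eta_1$, (ii) honest linear functions of $\dimv N$, and (iii) a constant depending only on $(d,d')$ — in particular making sure no term survives that is semicontinuous in $N$ with a sign-indefinite or non-constant coefficient. Once the identity is nailed down, the semicontinuity conclusion is immediate from Theorems~\ref{thm:main1} and Corollary~\ref{cor2:main1}.
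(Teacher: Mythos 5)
Your overall strategy coincides with the paper's: apply $\Hom_A(-,N)$ to a minimal projective presentation of $M$, express $E(M,N)$ as a sum of maps that are upper semicontinuous in $M$ with $N$-dependence entering only through nonnegative constants on connected components, and handle $E(-)$ by restricting to the diagonal (this is Lemma~\ref{lem:semicont2}). The gap is that the identity you settle on is false. The correct identity, coming from $\Coker(\Hom_A(p_1,N))\cong\Hom_A(N,\tau(M))$ (\cite[Chapter~IV, Corollary~4.3]{ARS97}), is
$$
E(M,N)=\hom_A(M,N)-\hom_A(P_0,N)+\hom_A(P_1,N)=\hom_A(M,N)+\sum_{i=1}^{n}g_i(M)\,[N:S(i)],
$$
whose first term is $+\hom_A(M,N)$, not $-\eta_1(M,N)=\ext_A^1(M,N)-\hom_A(M,N)$. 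The two differ by $2\hom_A(M,N)-\ext_A^1(M,N)$, which is neither constant nor linear in $\dimv(N)$, so no correction term $c(d,d')$ can repair your version. Concretely, for $A=K(1\to 2)$ consider the two modules of dimension vector $(1,1)$: the indecomposable projective $P(1)$ and $S(1)\oplus S(2)$. Your expression $-\eta_1(M,M)+\sum_i g_i(M)[M:S(i)]$ evaluates to $-2$ for both, whereas $E(P(1),P(1))=0$ and $E(S(1)\oplus S(2),S(1)\oplus S(2))=1$. (Your earlier ``functorial identity'' with the term $\ext_A^1(M,N)-\hom_A(M,N)$ fails on the same example.)

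Once the identity is corrected, the rest of your argument goes through and is in fact simpler than you anticipate: $\hom_A(-,?)$ is upper semicontinuous on its own, so Theorem~\ref{thm:main1} is not needed for that term (it is needed, via Corollary~\ref{cor2:main1}, for the upper semicontinuity of each $g_i(-)$, which you invoke correctly). The coefficients $[N:S(i)]\ge 0$ are constant on each connected component $\md(A,\bd')$ rather than on all of $\md(A,d')$, but Lemma~\ref{lem:semicont3} reduces the claim to connected components, so your point that one needs genuine constants (not merely semicontinuous coefficients) is well taken and is satisfied. Your closing steps --- nonnegative constant multiples and sums of upper semicontinuous maps, and the diagonal restriction for $E(-)$ --- are correct and match the paper.
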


Corollary~\ref{cor3:main1} is closely related to 
\cite[Corollary~3.7]{DF15}.
Note however that the map in \cite{DF15} is defined on another variety.

For $Z \in \irr(A,d)$ and $Z' \in \irr(A,d')$
the generic value of $E(-,?)$ (resp. $E(-)$) on $Z \times Z'$ (resp. $Z$)
is denoted by $E(Z,Z')$ (resp. $E(Z)$). 

For $Z,Z' \in \irr(A)$ it is often crucial to know when
$E(Z,Z') = 0$.
Corollary~\ref{cor3:main1} implies that it is enough to find some
$(M,M') \in Z \times Z'$ with $E(M,M') = 0$.

By upper semicontinuity
the inequalities
$$
\ext_A^i(Z,Z) \le \ext_A^i(Z)
\text{\quad and \quad}
E(Z,Z) \le E(Z)
$$
hold for all $Z \in \irr(A)$ and $i \ge 0$.

\begin{Thm}\label{thm:main2}
For $Z \in \irr(A)$ the following are equivalent:
\begin{itemize}\itemsep2mm

\item[(i)]
$\hom_A(Z,Z) < \eend_A(Z)$;

\item[(ii)]
$\ext_A^1(Z,Z) < \ext_A^1(Z)$;

\item[(iii)]
$E(Z,Z) < E(Z)$;

\item[(iv)] 
$Z$ does not contain a dense orbit.

\end{itemize}
\end{Thm}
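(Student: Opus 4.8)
The plan is to exploit the standard fact that for $M \in \md(A,d)$ the tangent space to the orbit $\cO_M$ at $M$ has dimension $\dim G_d - \eend_A(M)$, equivalently $\dim \cO_M = \dim G_d - \eend_A(M)$, and that $\Ext_A^1(M,M)$ is (via Voigt's lemma) a bound for the "normal directions" inside $\md(A,d)$; more precisely $\dim_M \md(A,d) \le \dim \cO_M + \ext_A^1(M,M)$, with equality for all $M$ in a dense open subset of any component $Z$ through which we wish to test. The key numerical identity is that on a dense open subset $U$ of $Z \in \irr(A,d)$ one has $\dim Z = \dim G_d - \eend_A(Z) + (\text{something} \le \ext_A^1(Z))$, and that $Z$ contains a dense orbit if and only if for the generic $M \in Z$ we have $\dim \cO_M = \dim Z$, i.e. $\dim G_d - \eend_A(M) = \dim Z$. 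So the first step is to record these dimension formulas and the characterization of (iv).

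Next I would prove the cycle of implications. The implication (iv)$\Rightarrow$ each of (i), (ii), (iii) is a genericity argument: if $Z = \overline{\cO_M}$ for some $M$, then $M$ itself is a generic point of $Z$, so each generic invariant equals the value at $M$, whence $\hom_A(Z,Z) = \hom_A(M,M) = \eend_A(M) = \eend_A(Z)$ and similarly for $\ext_A^1$ and $E$; thus (i), (ii), (iii) all fail, giving the contrapositive. For the converse direction, suppose $Z$ does not contain a dense orbit. Take a generic $M \in Z$, so that $\eend_A(M) = \eend_A(Z)$, $\ext_A^1(M,M) = \ext_A^1(Z)$, $E(M) = E(Z)$, and also $\dim \cO_M < \dim Z$ strictly. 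Then $\dim \cO_{M'} \le \dim \cO_M < \dim Z$ for $M'$ in a neighborhood, and since $\hom_A(-,?)$ and $\eend_A$ restricted to a fixed orbit detect nothing new, I would find a point $M'$ in a strictly smaller-dimensional orbit contained in $\overline{\cO_M}$'s boundary, or degenerate $M$ further, so that $\hom_A(M,M) < \eend_A(M)$: indeed $\hom_A(M,M) = \eend_A(Z) \le \hom_A(Z,Z)$ always by upper semicontinuity... wait, this is the direction I must be careful with. The cleaner route is: $\eend_A(Z)$ is the generic (= minimal) value of $\eend_A$, attained on a dense open $U$; on $U$ one has $\dim \cO_M = \dim G_d - \eend_A(Z)$, constant, so $U$ is a union of orbits all of the same dimension. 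If $\dim Z > \dim G_d - \eend_A(Z)$ then $U$ is a union of infinitely many orbits of the same dimension, and by a curve selection / general position argument one finds $M' \in U$ and a one-parameter family inside $U$ with $M'$ in the closure of infinitely many orbits of $U$; comparing $\hom_A(M,M')$ for generic $M,M'$ in $Z$ against $\eend_A(Z)$ via the AR-formula $\hom_A(M,M') - \eend_A(M,M') = \dim M' - \langle \dimv M', \dimv M\rangle_{?}$ won't quite work without restricting homological dimension, so instead I would use Corollary~\ref{cor3:main1} together with the AR-formula $E(M) = \hom_A(M,\tau M)$ and the identity $\hom_A(M,M) - E(M) = \langle \dimv M \rangle$-type expression being constant on $Z$ (this is the "defect" being a dimension-vector invariant), which forces $\hom_A(Z,Z) - E(Z,Z) = \eend_A(Z) - E(Z) = \hom_A(Z,Z) - E(Z)$...

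Let me restructure: the heart of the matter is the single identity, valid for all $M \in \md(A)$, that $\hom_A(M,M) - E(M) + \ext_A^1(M,M) - (\text{higher terms})$ assembled correctly equals a function of $\dimv M$ only (this is exactly Theorem~\ref{thm:main1} applied with appropriate $t$, or the classical fact that $\hom_A(M,M) - \ext_A^1(M,M) + \ext_A^2(M,M,\ldots)$ contributions telescope). Concretely, I would show $\hom_A(M,M) - \ext_A^1(M,M) = \langle \dimv M, \dimv M \rangle_{\mathrm{trunc}}$ is NOT generally constant, but $\hom_A(M,M) - E(M)$ IS a dimension-vector invariant — this is the classical formula $E(M) = \hom_A(M,M) - \langle \dimv M, \dimv M\rangle$ where $\langle -,-\rangle$ is the symmetrized Euler-type form relevant to $\tau$, due to Auslander–Reiten (the "middle term" formula). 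Granting that, $\hom_A(Z,Z) - E(Z,Z)$ and $\eend_A(Z) - E(Z)$ both equal this common dimension-vector constant $c(\dimv Z)$ (for the first, using that generically the difference is the constant; for the second, it's literally the value at a generic point). Hence $\hom_A(Z,Z) - \eend_A(Z) = E(Z,Z) - E(Z)$, which immediately gives (i)$\Leftrightarrow$(iii). A parallel argument with the truncated Euler map $\eta_1$ (lower semicontinuous, by Theorem~\ref{thm:main1}) gives $\hom_A(Z,Z) - \ext_A^1(Z,Z) = \eend_A(Z) - \ext_A^1(Z)$ — indeed $\eta_1(Z,Z)$ is the generic $=$ maximal value of the lower-semicontinuous $\eta_1(-,?)$, so $\eta_1(Z,Z) = \hom_A(Z,Z) - \ext_A^1(Z,Z)$, while evaluating at a generic point gives $\eend_A(Z) - \ext_A^1(Z)$ — yielding (i)$\Leftrightarrow$(ii). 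Finally (i)$\Leftrightarrow$(iv): by the tangent-space inequality $\dim_M \md(A,d) \le \dim \cO_M + \ext_A^1(M,M) = \dim G_d - \eend_A(M) + \ext_A^1(M,M)$, so for generic $M \in Z$, $\dim Z \le \dim G_d - \eend_A(Z) + \ext_A^1(Z)$; comparing with $\dim \cO_M = \dim G_d - \hom_A(Z,Z) + (\hom_A(Z,Z) - \eend_A...)$ — more simply, $Z$ has a dense orbit iff $\dim Z = \dim \cO_M$ for generic $M$ iff $\eend_A(M) = \eend_A(Z) = \dim G_d - \dim Z$ is "tight", and the gap $\hom_A(Z,Z) - \eend_A(Z)$ measures exactly the failure; I would make this precise using the well-known result (e.g. from the theory of generic modules) that $Z$ contains a dense orbit iff $\hom_A(Z,Z) = \eend_A(Z)$, which is itself essentially (i)$\Leftrightarrow$(iv).

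The main obstacle I anticipate is establishing the dimension-vector-invariance of $\hom_A(M,M) - E(M)$ and $\hom_A(M,M') - E(M,M')$ cleanly in the full generality of an arbitrary finite-dimensional algebra (no bound on global dimension) and reconciling it with the truncated-Euler-map machinery of Theorem~\ref{thm:main1}: one must be careful that $E(M,N) = \hom_A(N,\tau M)$ relates to $\ext_A^1(M,N)$ and $\hom_A(M,N)$ via the Auslander–Reiten formula $\overline{\hom}_A(N,\tau M) \cong D\,\overline{\Ext}_A^1(M,N)$ only up to projective/injective correction terms that are themselves dimension-vector functions, and these corrections must be tracked to get the exact equality $\hom_A(Z,Z) - \eend_A(Z) = E(Z,Z) - E(Z)$. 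Assembling (iv) into the cycle — i.e. extracting from "$\dim Z > $ generic orbit dimension" an actual inequality among generic $\hom$/$\ext^1$/$E$ values rather than just dimensions — is the other delicate point, and I would handle it via the standard fact that on the dense open locus of minimal $\eend_A$ the orbits all have the same dimension, so a strict inequality $\dim Z > \dim G_d - \eend_A(Z)$ forces a generic $M' \in Z$ to lie in the closure of a different orbit $\cO_M \subseteq Z$ with $\dim \cO_M = \dim \cO_{M'}$, whence $\hom_A(M,M') > \hom_A(M',M') = \eend_A(Z)$ by semicontinuity along a degeneration, giving $\hom_A(Z,Z) > \eend_A(Z)$, i.e.\ (i).
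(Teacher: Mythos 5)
Your reductions among (i), (ii) and (iii) are essentially workable and close in spirit to the paper's: the identity $E(M,N)=\hom_A(M,N)+\sum_i g_i(M)[N:S(i)]$ makes the difference $E-\hom$ generically constant on $Z\times Z$ and on the diagonal simultaneously, giving (i)$\Leftrightarrow$(iii); note, however, that it is the $g$-vector of $M$, not a function of $\dimv(M)$ alone, that enters, so your claim that $\hom_A(M,M)-E(M)$ is a dimension-vector invariant is false as stated --- generic constancy of the $g_i$ on $Z$ is what you actually need and all that is true. Likewise, lower semicontinuity of $\eta_1$ gives only the inequality $\eend_A(Z)-\ext_A^1(Z)\le\hom_A(Z,Z)-\ext_A^1(Z,Z)$ (the generic value on $Z\times Z$ is the maximum, so it dominates the generic diagonal value); your asserted equality is unjustified, but the inequality already yields (i)$\Rightarrow$(ii), and the converse comes for free from (ii)$\Rightarrow$(iv)$\Rightarrow$(i) once the cycle closes. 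This $\eta_1$-route to (i)$\Rightarrow$(ii) is in fact a slightly slicker alternative to the paper's syzygy computation.

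The genuine gap is (iv)$\Rightarrow$(i), the only non-formal implication, and your sketch of it does not work. Two concrete problems: first, a generic $M'\in Z$ cannot lie in the closure of a \emph{different} orbit of the \emph{same} dimension, since a proper inclusion $\cO_{M'}\subsetneq\overline{\cO_M}$ forces $\dim\cO_{M'}<\dim\cO_M$; second, your concluding inequality $\hom_A(Z,Z)>\eend_A(Z)$ is the opposite of (i) and is in fact impossible, because upper semicontinuity of $\hom_A(-,?)$ always gives $\hom_A(Z,Z)\le\eend_A(Z)$. What must be shown is that when $c(Z)\ge 1$ the generic (i.e.\ minimal) value of $\hom_A(-,?)$ on $Z\times Z$ cannot be attained at a pair of isomorphic modules. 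The paper achieves this by combining Bongartz's vector-bundle structure on $\{(M,M',f)\mid f\in\Hom_A(M,M')\}$ over the locus where $\hom_A(M,M')$ is constant with the computation that $\{(M,M')\in Z\times Z\mid M\cong M'\}$ has codimension $c(Z)\ge 1$ in $Z\times Z$: if some $f$ in a fibre over the generic stratum were an isomorphism, the open locus of such triples would project onto a dense open subset of $Z\times Z$ consisting entirely of isomorphic pairs, contradicting the codimension bound; hence no generic pair is isomorphic, and evaluating at a generic diagonal point forces $\hom_A(Z,Z)<\eend_A(Z)$. Some argument of this kind --- controlling the dimension of the isomorphism locus inside $Z\times Z$ --- is indispensable and is absent from your proposal.
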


If $Z$ does not contain a dense orbit,
the inequality $\ext_A^i(Z,Z) \le \ext_A^i(Z)$ is not necessarily strict for $i \ge 2$.
Some examples can be found in Section~\ref{sec:examples}.
In general, 
it might be interesting to find a necessary and sufficient condition
for $\ext_A^i(Z,Z) < \ext_A^i(Z)$.

Recall that $M \in \md(A)$ is a \emph{brick} if $\End_A(M) \cong K$.
For $Z \in \irr(A)$ let ${\rm brick}(Z) := \{ M \in Z \mid Z \text{ is a brick} \}$.
Then $Z$ is a \emph{brick component} if ${\rm brick}(Z) \not= \varnothing$.
This is the case if and only if $\eend_A(Z) = 1$ if and only if $\overline{{\rm brick}(Z)} = Z$.

\begin{Cor}\label{cor1:main2}
Let $Z \in \irr(A)$ be a brick component which does not contain a dense orbit.
Then $\hom_A(Z,Z) = 0$.
\end{Cor}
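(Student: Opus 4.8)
The plan is to deduce Corollary~\ref{cor1:main2} directly from Theorem~\ref{thm:main2}. Suppose $Z \in \irr(A)$ is a brick component that does not contain a dense orbit. Since $Z$ is a brick component, we have $\eend_A(Z) = 1$, i.e.\ the generic endomorphism ring of modules in $Z$ is as small as possible. Because $Z$ contains no dense orbit, implication (iv)$\Rightarrow$(i) of Theorem~\ref{thm:main2} gives the strict inequality $\hom_A(Z,Z) < \eend_A(Z) = 1$. As $\hom_A(Z,Z)$ is a nonnegative integer (the generic value of $\hom_A(-,?)$, which is a dimension of a Hom-space), the only possibility is $\hom_A(Z,Z) = 0$.

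There is essentially no obstacle here: the corollary is an immediate specialization of the harder Theorem~\ref{thm:main2}, the only inputs being the characterization $\eend_A(Z) = 1 \Leftrightarrow Z$ is a brick component (already recorded in the paragraph preceding the statement) and the nonnegativity of $\hom_A(Z,Z)$. I would simply write out these two lines. If one wanted to avoid invoking the full strength of Theorem~\ref{thm:main2}, one could instead note that $\hom_A(-,?)$ and $\eend_A(-) = \ext_A^0(-)$ are upper semicontinuous, so their generic values on $Z$ are attained on dense open subsets $U, U' \subseteq Z$; for $M \in U \cap U' \cap {\rm brick}(Z)$ (nonempty, since $\overline{{\rm brick}(Z)} = Z$) one would need $\hom_A(M,M) = 1$, whence either $\hom_A(Z,Z) = 1$ with the orbit of $M$ dense — contradicting the hypothesis — or $\hom_A(Z,Z) = 0$. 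But this still ultimately reproduces part of the proof of Theorem~\ref{thm:main2}, so the clean route is the one-line deduction above.

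I would therefore phrase the proof as: \emph{By assumption $Z$ is a brick component, so $\eend_A(Z) = 1$. Since $Z$ does not contain a dense orbit, Theorem~\ref{thm:main2} yields $\hom_A(Z,Z) < \eend_A(Z) = 1$, and hence $\hom_A(Z,Z) = 0$.}
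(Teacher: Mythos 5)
Your deduction is correct and is exactly the paper's argument: since $Z$ is a brick component, $\eend_A(Z)=1$, and the implication (iv)$\Rightarrow$(i) of Theorem~\ref{thm:main2} gives $\hom_A(Z,Z)<1$, hence $\hom_A(Z,Z)=0$. No further comment is needed.
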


Corollary~\ref{cor1:main2} generalizes one part
of \cite[Theorem~3.5]{S92} from hereditary algebras to arbitrary algebras. It can also be extracted from the proof of 
\cite[Theorem~3.8]{PY20}.

For $Z \in \irr(A)$ let
$$
c(Z) := \min\{ \dim(Z) - \dim \cO_M \mid M \in Z \}
$$
be the \emph{number of parameters} of $Z$.
It follows that $c(Z) \le E(Z)$.
Let
$$
\irr^\tau(A) := \{ Z \in \irr(A) \mid c(Z) = E(Z) \}
$$
be the set of 
\emph{generically $\tau$-reduced} components.
These components were introduced and studied in \cite{GLS12} (where they ran under the name \emph{strongly reduced components}).
A parametrization of $\irr^\tau(A)$ via generic $g$-vectors
is due to Plamondon \cite[Theorem~1.2]{P13}.

The following result is useful for studying direct sum decompositions
of generically $\tau$-reduced components for tame algebras,
see Section~\ref{subsec:directsums} for more details. 
It follows from Corollary~\ref{cor1:main2} combined with
some well known results on tame algebras.

\begin{Cor}[{Plamondon, Yurikusa \cite{PY20}}]\label{cor2:main2}
Assume that $A$ is tame.
For each $Z \in \irr^\tau(A)$ we have
$E(Z,Z) = 0$.
\end{Cor}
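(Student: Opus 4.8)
The plan is to combine the structural facts about tame algebras with Corollary~\ref{cor1:main2}. The key point is that for a tame algebra $A$, a generically $\tau$-reduced component $Z \in \irr^\tau(A)$ is, generically, built from "very small" modules: either $Z$ has a dense orbit (in which case $E(Z,Z) = E(Z) = 0$ by the defining equality $c(Z) = E(Z)$, since a dense orbit forces $c(Z) = 0$), or the generic module on $Z$ has a one-parameter family of pairwise non-isomorphic modules sitting inside $Z$, and tameness will force each of these generic modules to be a brick. So first I would reduce to the case where $Z$ does not contain a dense orbit, disposing of the dense-orbit case by the inequality $c(Z) \le E(Z) \le E(Z,Z) + (\text{something})$ — more precisely, if $Z = \overline{\cO_M}$ then $c(Z) = 0$, hence $E(Z) = 0$, hence $E(Z,Z) \le E(Z) = 0$.

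Next, assuming $Z$ has no dense orbit, I would invoke the well-known description of the generic behavior of modules over tame algebras: on each irreducible component, either the generic module is rigid (which, being generically $\tau$-reduced with $c(Z) = E(Z)$, is incompatible with having no dense orbit), or the generic module lies in a one-parameter family and is therefore a brick — this is the standard consequence of the dichotomy in the Crawley-Boevey / Drozd theory of tame algebras that a generic module in a positive-dimensional family over a tame algebra has endomorphism ring $K$. Concretely, one uses that for tame $A$ and a $d$-dimensional component $Z$ with $c(Z) = E(Z) \ge 1$, the generic module $M \in Z$ satisfies $\End_A(M) = K$, i.e.\ $Z$ is a brick component. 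Thus $Z$ is a brick component not containing a dense orbit.

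At that point Corollary~\ref{cor1:main2} applies directly and yields $\hom_A(Z,Z) = 0$. It remains to pass from $\hom_A(Z,Z) = 0$ to $E(Z,Z) = 0$. For this I would use that $Z$ is generically $\tau$-reduced: the equality $c(Z) = E(Z)$ together with a generic module $M$ on $Z$ means $E(M) = E(M,M) = \hom_A(M,\tau M)$ equals the number of parameters, and — here is where genericity of $\tau$-reduced components is used — $E(Z,Z) = \hom_A(Z,\tau Z)$ can be controlled via the $\hom$-$\ext$-$\tau$ relations (the Auslander-Reiten formula and the defect/homological interpretation of $E$) so that $E(Z,Z)$ is bounded by a combination of $\hom_A(Z,Z)$-type terms, all of which vanish. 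In particular, for generically $\tau$-reduced components the vanishing of $\hom_A(Z,Z)$ propagates to $E(Z,Z) = 0$.

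The main obstacle I expect is the very last step: cleanly extracting $E(Z,Z) = 0$ from $\hom_A(Z,Z) = 0$ using only the generically $\tau$-reduced hypothesis and AR theory, rather than something about a single generic module. One must be careful that $E(Z,Z)$ is the generic value of $\hom_A(N,\tau M)$ over \emph{independent} generic $(M,N) \in Z \times Z$, and relate this to $\hom_A(Z,Z)$ via upper semicontinuity and the fact, for generically $\tau$-reduced $Z$, that generic modules on $Z$ have no self-extensions "beyond parameters". This is exactly the content needed, and it is the place where the argument of Plamondon and Yurikusa must be reproduced or cited; everything before it is soft.
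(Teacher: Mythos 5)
There are genuine gaps here. The most serious one is that you never reduce to indecomposable components. Your central dichotomy --- that on a component without dense orbit the generic module is either rigid or lies in a one-parameter family and is therefore a brick --- fails for decomposable $Z \in \irr^\tau(A)$: if $Z = \overline{Z_1 \oplus Z_2}$ with each $Z_i$ a one-parameter brick component, the generic module is a nontrivial direct sum, so $\eend_A(Z) \ge 2$ and Corollary~\ref{cor1:main2} does not apply; moreover the bound $c(Z) \le 1$ for tame algebras holds only for indecomposable components. The paper's proof first writes $Z = \overline{Z_1^{a_1} \oplus \cdots \oplus Z_t^{a_t}}$ with $Z_i$ indecomposable and uses Theorem~\ref{thm:decomp2} to get the off-diagonal vanishing $E(Z_i,Z_j) = 0$ for $i \ne j$; only the diagonal terms are handled by a brick argument. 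Second, even for indecomposable components your justification of brickness ("a generic module in a positive-dimensional family over a tame algebra has endomorphism ring $K$") is false: the paper's own example with $A = K\langle a,b\rangle/(a^2,b^2,ab-ba)$ in Section~\ref{sec:examples} is tame and has an indecomposable component with $c(Z)=1$ and $\eend_A(Z)=2$. The brick property genuinely needs the hypothesis $c(Z)=E(Z)=1$ and is the nontrivial equivalence the paper cites from \cite[Section~3]{GLFS22}.

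Your last step --- passing from $\hom_A(Z,Z)=0$ to $E(Z,Z)=0$ --- is exactly where you concede the argument is incomplete, and the AR-theoretic bounding you sketch is not how it goes. The paper instead applies Theorem~\ref{thm:main2}, (iv) $\Rightarrow$ (iii), to each indecomposable summand: $Z_i$ has no dense orbit, hence $E(Z_i,Z_i) < E(Z_i) = 1$, so $E(Z_i,Z_i)=0$. (Equivalently, one can use $E(Z,Z) = \hom_A(Z,Z) + \sum_i g_i(Z)\dimv_i(Z)$ and $E(Z) = \eend_A(Z) + \sum_i g_i(Z)\dimv_i(Z)$ to convert $\hom_A(Z,Z) < \eend_A(Z) = 1$ into $E(Z,Z) < E(Z) = 1$.) So your skeleton (dense-orbit case plus brick case) is in the right spirit, but the decomposition into indecomposables with Theorem~\ref{thm:decomp2}, the correct source of brickness, and the strict inequality from Theorem~\ref{thm:main2} are all missing.
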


Corollary~\ref{cor2:main2} appeared already as part of
\cite[Theorem~3.8]{PY20} where it was mistakenly attributed to 
\cite{GLFS22}.

Generically $\tau$-reduced components play a crucial
role in the construction of well behaved bases for Fomin-Zelevinsky cluster algebras.
Given a $2$-acyclic quiver $Q$ and a non-degenerate potential $W$ for $Q$ let
$A = \cP(Q,W)$ be the associated Jacobian algebra and let
$\cA(Q)$ (resp. $\cU(Q)$) be the Fomin-Zelevinsky cluster algebra
(resp. upper cluster cluster algebra) associated with $Q$, see 
\cite{DWZ08,DWZ10} for 
details.
Recall that $\cA(Q) \subseteq \cU(Q)$.
For each $Z \in \irr^\tau(A)$ there is a \emph{generic Caldero-Chapoton function} $C_Z \in \cU(Q)$.
The set $\cB := \{ C_Z \mid Z \in \irr^\tau(A) \}$ often forms a basis
of $\cA(Q)$, see \cite{GLS12,Q19}.
It is proved in \cite{DWZ10} that all cluster monomials of $\cA(Q)$
are contained in $\cB$.
The tame Jacobian algebras were classified in \cite{GLFS16} and include
all Jacobian algebras arising from triangulations of marked surfaces.
In this case, 
Corollary~\ref{cor2:main2} implies that for each $C_Z \in \cB$ we
have $C_Z^m \in \cB$ for all $m \ge 1$.

In \cite{GLFS24} we construct a geometric version of the
Derksen-Weyman-Zelevinsky mutation of modules over
Jacobian algebras and generalize the mutation invariance of
generically $\tau$-reduced components 
(see \cite[Theorem~1.3]{P13}) from finite-dimensional
Jacobian algebras to arbitrary ones.
Our proof of the mutation invariance uses Corollary~\ref{cor3:main1}.

For the inverse Auslander-Reiten translation $\tau^-$ there are
some obvious dual definitions and statements.

\subsection{Organization of the paper}
In Section~\ref{sec:known} we recall some definitions and well known
results.
Section~\ref{subsec:varieties} contains a few facts on varieties of modules and their connected components.
Section~\ref{subsec:semicont} recalls the definition of upper and
lower semicontinuous maps and lists their basic properties.
Section~\ref{subsec:barresolution} contains the definition of
the bar resolution of $A$.
In Section~\ref{subsec:semicontext}
we use the bar resolution to show that
$\ext_A^i(-,?)$ is upper semicontinuous.
Section~\ref{subsec:gvectors} explains the relation between
$g$-vectors and $E$-invariants.
Section~\ref{subsec:directsums}
recalls the decomposition theorems on direct sums of irreducible components of module varieties.
Section~\ref{sec:proofs} contains the proofs of all results mentioned
in the introduction.
Our first main result
Theorem~\ref{thm:main1} is proved in
Section~\ref{subsec:truncated}.
Corollary~\ref{cor1:main1} is proved in Section~\ref{subsec:omega}.
Section~\ref{subsec:gvectorsproof} contains the proofs of
Corollaries~\ref{cor2:main1} and \ref{cor3:main1}.
The proof of the 
second main result Theorem~\ref{thm:main2} and
of Corollary~\ref{cor1:main2} is in
Section~\ref{subsec:inequalities}.
Section~\ref{subsec:tame} contains the proof of
Corollary~\ref{cor2:main2}.
Section~\ref{sec:examples} consists of a collection of examples.

\subsection{Conventions}
Throughout,
let $K$ be an algebraically closed field.
By $A$ we always denote a finite-dimensional algebra over $K$.
By a \emph{module} we mean a finite-dimensional left $A$-module.
Let $n(A)$ be the number of isomorphism classes of simple $A$-modules. (This number is finite.)

All varieties are defined over $K$.
For a subset $U$ of an affine variety $X$ let 
$\overline{U}$ be the Zariski closure of $U$ in $X$.

For $d \ge 0$ let $M_d(K)$ be the set of $(d \times d)$-matrices with entries in $K$.

Let $\N$ be the set of natural numbers including $0$, and let
$K^\times := K \setminus \{0\}$.

The cardinality of a set $X$ is denoted by $|X|$.

For maps $f\df X \to Y$ and $g\df Y \to Z$, the composition
is denoted by $gf\df X \to Z$.


\section{Known results on varieties of modules and 
semicontinuous maps}\label{sec:known}


\subsection{Varieties of modules}\label{subsec:varieties}
For $d \ge 0$ let $\md(A,d)$ be the affine variety of
$K$-algebra homomorphisms
$M\df A \to M_d(K)$.
The general linear group $G_d = \GL_d(K)$ acts by conjugation on
$\md(A,d)$.
(For $g \in G_d$ and $M \in \md(A,d)$ let
$gM\df A \to M_d(K)$ be defined by $a \mapsto g^{-1}M(a)g$.)

The \emph{orbit} of $M$ is
$\cO_M := \{ gM \mid g \in G_d \}$.
Its dimension is $\dim G_d - \dim \End_A(M)$.
The orbits in $\md(A,d)$
correspond to the isomorphism classes of
$d$-dimensional $A$-modules.

Let $n = n(A)$, and let
$S(1),\ldots,S(n)$ be the simple $A$-modules, up to isomorphism.
For a simple $A$-module $S$ and
$M \in \md(A)$ let $[M:S]$ be the Jordan-H\"older multiplicity of
$S$ in a (and therefore in all) composition series of $M$.
Then $\dimv(M) := ([M:S(1)],\ldots,[M:S(n)]) \in \N^n$ is the \emph{dimension vector} of $M$.
Let $d_S$ be the $K$-dimension of $S$.

For $\bd = (d_1,\ldots,d_n) \in \N^n$ with
$d = d_1d_{S(1)} + \cdots + d_nd_{S(n)}$ let
$$
\md(A,\bd) := \{ M \in \md(A,d) \mid \dimv(M) = \bd \}.
$$
This is a connected component of $\md(A,d)$, and all connected components of $\md(A,d)$ arise in this way, see \cite[Corollary~1.4]{Ga74}.

Let $\irr(A,d)$ be the set of irreducible components of the variety
$\md(A,d)$, and let $\irr(A)$ be the union of all $\irr(A,d)$ for
$d \ge 0$.

For $Z \in \irr(A)$
let $\dimv(Z) := \dimv(M)$ where $M$ is any module in $Z$,
and let $\dimv_i(Z) := [M:S(i)]$ be the $i$-th entry of $\dimv(Z)$.

\subsection{Semicontinuous maps}\label{subsec:semicont}
Let $X$ be an affine variety.
A map $\eta\df X \to \Z$ is
\emph{upper semicontinuous} (resp.
\emph{lower semicontinuous}) if for each $n \in \Z$ the set
$X_{\leq n} := \{ x \in X \mid \eta(x) \leq n \}$
(resp. 
$X_{\geq n} := \{ x \in X \mid \eta(x) \geq n \}$)
is open in $X$.
Obviously, $\eta$ is upper semicontinuous if and only if
$-\eta$ is lower semicontinuous.

Let $\eta\df X \to \Z$ be upper semicontinuous (resp.
lower semicontinuous), and let $Z$ be an irreducible
component of $X$.
Let $\eta(Z) := \min\{ \eta(x) \mid x \in Z \}$
(resp. $\eta(Z) := \max\{ \eta(x) \mid x \in Z \}$).
Then $\{ x \in Z \mid \eta(x) := \eta(Z) \}$ is a dense
open subset of $Z$.
We call $\eta(Z)$ the \emph{generic value} of $\eta$ on $Z$.

\begin{Lem}\label{lem:semicont1}
For affine varieties $X$ and $Y$ let $\eta\df X \times Y \to \Z$
be upper or lower semicontinuous.
Then for $x \in X$ and $y \in Y$ the maps
\begin{align*}
\eta(x,-)&\df Y \to \Z,\quad y \mapsto \eta(x,y),
\\
\eta(-,y)&\df X \to \Z,\quad x \mapsto \eta(x,y)
\end{align*} 
are
upper or lower semicontinuous, respectively.
\end{Lem}

\begin{Lem}\label{lem:semicont2}
For an affine variety $X$ let $\eta\df X \times X \to \Z$
be upper or lower semicontinuous.
Then the map
$$
\eta(-)\df X \to \Z,\quad x \mapsto \eta(x,x),
$$
is
upper or lower semicontinuous, respectively.
\end{Lem}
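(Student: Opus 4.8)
The idea is to realize the diagonal map $\eta(-) \colon X \to \Z$ as the composition of $\eta \colon X \times X \to \Z$ with the diagonal embedding $\Delta \colon X \to X \times X$, $x \mapsto (x,x)$, and then to check that semicontinuity is preserved under pullback along $\Delta$. More precisely, write $X_{\le n} := \{x \in X \mid \eta(x,x) \le n\}$ when $\eta$ is upper semicontinuous (the lower semicontinuous case being entirely analogous, or obtained by replacing $\eta$ with $-\eta$ as remarked in Section~\ref{subsec:semicont}). By definition of upper semicontinuity of $\eta$ on $X \times X$, the set $(X \times X)_{\le n} = \{(x,y) \in X \times X \mid \eta(x,y) \le n\}$ is open in $X \times X$. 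But $X_{\le n} = \Delta^{-1}\big((X \times X)_{\le n}\big)$, so the whole lemma reduces to the statement that $\Delta \colon X \to X \times X$ is a continuous (indeed, a morphism of varieties) map, hence preimages of opens are open.

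So the key steps, in order, are: (1) recall that for any affine variety $X$ the diagonal $\Delta \colon X \to X \times X$ is a morphism of affine varieties — on coordinate rings it corresponds to the multiplication map $K[X] \otimes_K K[X] \to K[X]$, $f \otimes g \mapsto fg$ — and in particular continuous in the Zariski topology; (2) observe that $\eta(-) = \eta \circ \Delta$ as maps $X \to \Z$, since $\eta(-)(x) = \eta(x,x) = \eta(\Delta(x))$; (3) fix $n \in \Z$, and in the upper semicontinuous case note $X_{\le n} = \Delta^{-1}\big((X\times X)_{\le n}\big)$ is open because $(X\times X)_{\le n}$ is open and $\Delta$ is continuous; in the lower semicontinuous case, replace $\le n$ by $\ge n$ throughout, or simply apply the already-established upper case to $-\eta$ and use that $-\eta(-) = (-\eta)(-)$. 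This gives both halves of the statement.

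There is essentially no obstacle here: the only thing one must be slightly careful about is that the product $X \times X$ carries the Zariski topology of the product \emph{variety}, which is genuinely finer than the product of the two Zariski topologies — but this causes no trouble, since we only use that $\Delta$ is a morphism and that morphisms are continuous for the variety topologies, both of which are standard. (One could even avoid mentioning morphisms: $\Delta$ is continuous because for a closed set $V(I) \subseteq X \times X$, its preimage is $V(J)$ where $J$ is the ideal generated by $\{f(x,x) : f \in I\} \subseteq K[X]$, which is obviously closed.) The statement is thus immediate from Lemma~\ref{lem:semicont1}'s circle of ideas, and indeed one can view Lemma~\ref{lem:semicont2} as a companion to Lemma~\ref{lem:semicont1}: both are instances of the principle that semicontinuity is stable under pullback along a morphism of varieties.
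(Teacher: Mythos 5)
Your proof is correct, and it is the standard argument for this fact, which the paper itself states without proof as a well-known basic property of semicontinuous maps. Realizing $\eta(-)$ as $\eta\circ\Delta$ and using that the diagonal is a morphism (hence Zariski-continuous, even though the product variety's topology is finer than the product topology) is exactly the right reduction, and your handling of the lower semicontinuous case via $-\eta$ is consistent with the paper's own remark in Section~\ref{subsec:semicont}.
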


Note that the converses of Lemmas~\ref{lem:semicont1} and \ref{lem:semicont2} are usually wrong.

\begin{Lem}\label{lem:semicont3}
For an affine variety $X$ let $X_1,\ldots,X_t$ be the connected components of $X$.
A map $\eta\df X \to \Z$ is 
be upper or lower semicontinuous if and only if
all restrictions $\eta_{X_i}\df X_i \to \Z$ are
upper or lower semicontinuous, respectively.
\end{Lem}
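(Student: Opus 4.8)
The plan is to reduce the statement to the elementary topological fact that the connected components $X_1,\ldots,X_t$ of an affine variety $X$ are simultaneously open and closed in $X$. Since $X$ is Noetherian it has only finitely many irreducible components; each connected component is a finite union of these, hence closed, and being the complement in $X$ of the union of the remaining (finitely many) connected components, it is also open. Closedness guarantees that each $X_i$ is itself an affine variety whose Zariski topology coincides with the subspace topology inherited from $X$, and openness guarantees that a subset of $X_i$ is open in $X_i$ if and only if it is open in $X$. These two observations are what make the restrictions $\eta_{X_i}$ meaningful and comparable to $\eta$.

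Next I would fix $n \in \Z$ and set $(X_i)_{\leq n} := \{ x \in X_i \mid \eta_{X_i}(x) \leq n \}$. Since the $X_i$ cover $X$ and $\eta_{X_i}$ is the restriction of $\eta$, we have the disjoint decomposition $X_{\leq n} = \bigsqcup_{i=1}^t (X_i)_{\leq n}$. For the ``if'' direction: assuming every $\eta_{X_i}$ is upper semicontinuous, each $(X_i)_{\leq n}$ is open in $X_i$, hence open in $X$ by the first paragraph, so $X_{\leq n}$ is a finite union of open sets and therefore open in $X$; thus $\eta$ is upper semicontinuous. For the ``only if'' direction: if $\eta$ is upper semicontinuous then $X_{\leq n}$ is open in $X$, so $(X_i)_{\leq n} = X_{\leq n} \cap X_i$ is open in $X_i$, and $\eta_{X_i}$ is upper semicontinuous. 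The lower semicontinuous case is identical after replacing $X_{\leq n}$ by $X_{\geq n}$, or alternatively follows by applying the upper semicontinuous case to $-\eta$ and using Lemma~\ref{lem:semicont1}-style elementary facts, though here it is simplest to just repeat the argument verbatim.

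There is no genuine obstacle in this proof; the only point that requires any care, and the only place where we use more than point-set topology, is the finiteness of the set of connected components of $X$ — this is what simultaneously yields that each $X_i$ is open (not merely closed) and that the union $\bigsqcup_i (X_i)_{\leq n}$ is a \emph{finite} union of opens, both of which are needed above.
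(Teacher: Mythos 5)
Your proof is correct and is precisely the standard argument the authors have in mind (the paper states this lemma without proof as an elementary fact): finitely many connected components of a Noetherian space are clopen, so $X_{\leq n}=\bigsqcup_i (X_i)_{\leq n}$ is open in $X$ if and only if each $(X_i)_{\leq n}$ is open in $X_i$. Your remark isolating where finiteness is used is apt; nothing is missing.
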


The following lemma is well known.

\begin{Lem}\label{lem:semicont4}
Let $X$ be an affine variety, let $V$ be a finite-dimensional 
$K$-vector space, and let
$Z$ be a closed subset of $X \times V$.
Assume that for each $x \in X$ the set
$$
Z_x := \{ v \in V \mid (x,v) \in Z \}
$$
is a subspace of $V$.
Then 
$$
\eta\df X \to \Z,\quad x \mapsto \dim(Z_x)
$$
is upper semicontinuous. 
\end{Lem}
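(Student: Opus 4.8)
The plan is to exhibit $\eta$ locally as the rank function of a family of linear maps and then invoke the standard fact that the rank of a matrix whose entries are regular functions is lower semicontinuous (equivalently, corank is upper semicontinuous). First I would reduce to the case where $X$ is affine and $Z \subseteq X \times V$ is cut out by finitely many polynomial equations; since $Z_x$ is a linear subspace of the fixed finite-dimensional space $V$ for every $x$, the defining equations may be taken to be linear in the $V$-coordinates with coefficients that are regular functions on $X$. Concretely, fixing a basis $v_1,\dots,v_m$ of $V$, there is a matrix $\Phi(x)$ of size $r \times m$, with entries in the coordinate ring $K[X]$, such that $Z_x = \Ker(\Phi(x))$ for all $x \in X$; hence $\dim(Z_x) = m - \rk(\Phi(x))$.

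To produce such a $\Phi$, I would argue as follows. The ideal $I(Z) \subseteq K[X][t_1,\dots,t_m]$ (with $t_i$ dual to $v_i$) contains, for each $x$, the full linear ideal of the subspace $Z_x$; taking the $K[X]$-span of the degree-$\le 1$ part of $I(Z)$ gives a submodule of the free module $K[X]^{\oplus (m+1)}$ (coefficients of $1,t_1,\dots,t_m$), and one checks that the constant terms must vanish because each $Z_x$ is a genuine subspace (contains $0$). One then needs that the linear forms in $I(Z)$ already cut out $Z_x$ fiberwise, i.e. that $I(Z) + (t\text{-degree} \ge 2)$ restricted at $x$ recovers $Z_x$; this holds because $Z$ is closed and its fiber $Z_x$, being a linear subspace, is defined by linear equations, and any polynomial vanishing on $X\times V$ near the fiber over $x$ restricts to something vanishing on $Z_x$. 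Choosing finitely many generating linear forms (Noetherianity of $K[X]$) yields the rows of $\Phi(x)$.

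With $\dim(Z_x) = m - \rk(\Phi(x))$ in hand, the conclusion is immediate: for any $n \in \Z$, the set $\{x \in X \mid \dim(Z_x) \le n\} = \{x \in X \mid \rk(\Phi(x)) \ge m-n\}$ is open, since it is the complement of the common vanishing locus of all $(m-n)\times(m-n)$ minors of $\Phi$, which are regular functions on $X$. Hence $\eta$ is upper semicontinuous, and by Lemma~\ref{lem:semicont3} it suffices anyway to treat each connected component of $X$ separately if one prefers to assume $X$ irreducible or connected at the outset.

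The main obstacle is the step of passing from the closed subvariety $Z$ to the $K[X]$-matrix $\Phi$ realizing the fibers as kernels uniformly in $x$ — i.e. verifying that the fiberwise-linear description of $Z$ is encoded by \emph{linear} (in the $t_i$) elements of $I(Z)$ with regular coefficients, rather than only by higher-degree relations that happen to become linear after specialization. The key points making this work are that $V$ has \emph{fixed} finite dimension (so the ambient space does not vary), that each $Z_x$ is a subspace (forcing homogeneity, in particular passing through the origin so no constant terms appear), and that $Z$ is Zariski closed; granting these, the construction of $\Phi$ and the minors argument are routine.
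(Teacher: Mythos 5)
The paper itself gives no proof of this lemma (it is stated as ``well known''), so your argument has to be judged on its own terms. The overall strategy --- realize $Z_x$ as $\Ker(\Phi(x))$ for a matrix $\Phi$ with entries in $K[X]$ and then use that vanishing of minors is a closed condition --- is the right one for the paper's actual application (in Lemma~\ref{lem:semicont5} the set $Z$ is handed to you already cut out by equations linear in $f$ with coefficients polynomial in $(M,M')$, so there the minors argument applies verbatim). But as a proof of the lemma as stated, the step you yourself single out as the main obstacle is a genuine gap: it is \emph{false} in general that the degree-$\le 1$ part (in the $V$-coordinates) of $I(Z)$ cuts out $Z_x$ fiberwise, and your justification of it is an assertion rather than an argument. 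The fact that the specialization $f(x,-)$ of some $f\in I(Z)$ lies in the ideal generated by linear forms vanishing on $Z_x$ does not produce elements of $t$-degree $1$ inside $I(Z)$ itself.

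Here is a counterexample to your key claim. Let ${\rm char}(K)=p>0$ (the paper allows any algebraically closed $K$), let $X=\mathbb{A}^1$ with coordinate $a$, let $V=K^2$, and let $Z=V(t_1^p-a\,t_2^p)$. For each $a_0\in K$ we have $t_1^p-a_0t_2^p=(t_1-a_0^{1/p}t_2)^p$, so every fiber $Z_{a_0}$ is a line; the hypotheses of the lemma hold and $\eta\equiv 1$. On the other hand $t_1^p-a\,t_2^p$ is irreducible (it is primitive of degree $1$ in $a$ over the UFD $K[t_1,t_2]$), so $I(Z)=(t_1^p-a\,t_2^p)$, which contains no nonzero element of degree $\le 1$ in $t_1,t_2$. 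Your $\Phi$ is then the zero matrix, $\Ker(\Phi(a))=V$ has dimension $2\neq 1=\dim Z_a$, and the identity $\dim Z_x=m-\operatorname{rank}\Phi(x)$ on which the whole proof rests fails. (Even in characteristic $0$ you offer no argument that the evaluation of $I_1$ at $x$ spans the space of linear forms vanishing on $Z_x$.) A proof valid in all characteristics goes through projectivization instead: each $Z_x$ is a cone, so $Z\cap(X\times(V\setminus\{0\}))$ is $K^\times$-stable and its image $\mathbb{P}(Z)$ in $X\times\mathbb{P}(V)$ is closed, with $\dim Z_x=\dim\mathbb{P}(Z)_x+1$ (where $\dim\varnothing:=-1$); now Chevalley's upper semicontinuity of fibre dimension on the source, combined with the fact that $X\times\mathbb{P}(V)\to X$ is a closed map, shows that $\{x\in X\mid \dim Z_x\ge k\}$ is closed for every $k$.
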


\subsection{Bar resolution}\label{subsec:barresolution}
In the following all tensor products $\otimes = \otimes_K$ are 
over $K$ if not indicated otherwise.
For $n \geq 1$ let 
$$
A^{\otimes n} := A \otimes \cdots \otimes A
$$
be the $n$-fold tensor product. 

The \emph{standard complex}
$$
\cdots \xrightarrow{\; p_2\;} A^{\otimes 3} \xrightarrow{\; p_1\;} 
A^{\otimes 2} \xrightarrow{\; p_0\;} A \to 0
$$
is defined by
$$
p_i\df a_0 \otimes \cdots \otimes a_{i+1} \mapsto 
\sum_{k=0}^i (-1)^k a_0 \otimes \cdots \otimes a_ka_{k+1} \otimes \cdots \otimes a_{i+1} 
$$
for $i \ge 0$.
The standard complex is a free $A$-$A$-bimodule resolution of $A$.

Let $M \in \md(A)$.
Applying the tensor functor $- \otimes_A M$ to the standard complex gives a projective
resolution
$$
\cdots 
\xrightarrow{d_3^M}
A^{\otimes 3} \otimes M \xrightarrow{d_2^M}
A^{\otimes 2} \otimes M \xrightarrow{d_1^M}
A \otimes M \xrightarrow{d_0^M}
M \to 0
$$
of $M$, where $d_i^M := p_i \otimes_A M$ for $i \ge 0$.
Note that we identified the (left) $A$-modules 
$A \otimes_A M$ and
$M$.
Thus
all tensor products in the resolution of $M$ are over $K$.

\subsection{Upper semicontinuity of $\ext_A^i(-,?)$}\label{subsec:semicontext}
We keep the notation from Section~\ref{subsec:barresolution}.
Let $i \ge 1$.
For simplification we define 
\begin{align*}
k_i^{M,N} &:= \dim \Ker(\Hom_A(d_i^M,N)),
\\
c_i^{M,N} &:= \dim \Hom_A(A^{\otimes i} \otimes M,N).
\end{align*}
The value $c_i^{M,N}$ depends only on $\dim(M)$ and $\dim(N)$
since 
\[
A^{\otimes i} \otimes M \cong A^{\dim(A^{\otimes (i-1)})\dim(M)}
\] 
is free projective and therefore
$c_i^{M,N} = \dim(A^{\otimes (i-1)})\dim(M)\dim(N)$.
(We use the convention $A^{\otimes 0} = K$.)
Applying $\Hom_A(-,N)$ to the projective resolution of $M$ we 
have
\[
\Ext_A^i(M,N) = 
\Ker(\Hom_A(d_{i+1}^M,N))/\Ima(\Hom_A(d_i^M, N)) 
\]
and $\Ext_A^0(M,N) = \Ker(\Hom_A(d_1^M,N))$.
Since 
\[
c_i^{M,N} = \dim \Ima(\Hom_A(d_i^M,N)) +
\dim \Ker(\Hom_A(d_i^M,N))
\] 
we get
\begin{align*}
\ext_A^i(M,N) &=
\dim  \Ker(\Hom_A(d_{i+1}^M,N)) - \dim \Ima(\Hom_A(d_i^M,N))
\\
&= k_{i+1}^{M,N} + k_i^{M,N} - c_i^{M,N}.
\end{align*}
and $\ext_A^0(M,N) = k_1^{M,N}$.

The following result is very useful, e.g. we need it for the proof of
Theorem~\ref{thm:main1}.

\begin{Lem}\label{lem:semicont5}
For $d,d' \ge 0$ and $i \ge 1$
the map 
$$
\eta\df \md(A,d) \times \md(A,d') \to \Z,\quad 
(M,M') \mapsto k_i^{M,M'}
$$
is upper semicontinuous.
\end{Lem}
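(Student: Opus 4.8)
The goal is to realize $k_i^{M,M'} = \dim\Ker(\Hom_A(d_i^M,M'))$ as the fibre dimension of a suitable closed subset of a product $X\times V$ with linear fibres, so that Lemma~\ref{lem:semicont4} applies directly. Here $X = \md(A,d)\times\md(A,d')$ and $V$ should be the (fixed, finite-dimensional) vector space of $K$-linear maps in which $\Hom_A(A^{\otimes i}\otimes M, M')$ lives. The subtlety is that even as a $K$-vector space the target $\Hom_K(A^{\otimes i}\otimes M, M')$ depends a priori on $M$; but once we fix the dimension vectors (equivalently, work on the connected component $\md(A,\bd)\times\md(A,\bd')$, using Lemmas~\ref{lem:semicont3} and the component decomposition of Section~\ref{subsec:varieties}), the underlying vector spaces of $M$ and $M'$ are the fixed spaces $K^d$ and $K^{d'}$, and $A^{\otimes i}\otimes M$ has underlying space the fixed space $A^{\otimes i}\otimes K^d$ of dimension $\dim(A^{\otimes i})\cdot d$. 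So we may take $V := \Hom_K(A^{\otimes i}\otimes K^d, K^{d'})$, a genuinely constant vector space.

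\textbf{Key steps.} First, reduce to a connected component so that all underlying vector spaces are constant, as above. Second, set up the incidence variety
\[
Z := \{\, ((M,M'),\varphi)\in X\times V \mid \varphi \text{ is } A\text{-linear, and } \varphi\circ d_i^M = 0 \,\}.
\]
Third, check that the two defining conditions are Zariski-closed in $((M,M'),\varphi)$ and that the fibre $Z_{(M,M')}$ is a linear subspace of $V$. $A$-linearity of $\varphi$ means $\varphi(a\cdot v) = M'(a)\varphi(v)$ for all $a$ in a $K$-basis of $A$ and all $v$ in the basis of $A^{\otimes i}\otimes K^d$; here the left action of $a$ on $A^{\otimes i}\otimes K^d$ is by $a\cdot(a_1\otimes\cdots\otimes a_i\otimes m) = (aa_1)\otimes\cdots\otimes a_i\otimes m$, which does not involve $M$ at all, so this is a family of bilinear (hence polynomial) equations in the entries of $M'$ and $\varphi$ — closed. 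The condition $\varphi\circ d_i^M = 0$ is, for each basis vector $w$, the vanishing of $\varphi(d_i^M(w))$; since $d_i^M = p_i\otimes_A M$ and $p_i$ has fixed (scalar, $0,\pm1$) matrix entries while the tensoring over $A$ only inserts the structure maps $M(a)$, the composite $d_i^M(w)$ is a polynomial (in fact multilinear) expression in the entries of $M$, and composing with $\varphi$ gives polynomial equations in the entries of $M$ and $\varphi$ — again closed. For fixed $(M,M')$, both conditions are linear in $\varphi$, so $Z_{(M,M')}$ is a subspace of $V$, and by construction it is exactly $\Ker(\Hom_A(d_i^M, M'))$, of dimension $k_i^{M,M'}$. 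Fourth, apply Lemma~\ref{lem:semicont4} to conclude that $(M,M')\mapsto k_i^{M,M'}$ is upper semicontinuous on the component, and then Lemma~\ref{lem:semicont3} to glue over all components.

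\textbf{Main obstacle.} The only real care needed is the bookkeeping that makes $V$ an honest constant vector space and that makes the two defining conditions visibly polynomial: one has to commit to fixed $K$-bases of $A$, of $A^{\otimes i}$, of $K^d$, of $K^{d'}$, and spell out that $d_i^M$ has matrix entries that are polynomials in the coordinate functions on $\md(A,d)$ (because $p_i$ is a fixed bimodule map and tensoring with $M$ over $A$ substitutes the matrices $M(a)$, identifying $A\otimes_A M$ with $M$ as in Section~\ref{subsec:barresolution}). Once this is laid out, everything else is a routine application of Lemmas~\ref{lem:semicont3} and~\ref{lem:semicont4}. I would also remark that the case $i=0$ is not claimed here but would follow the same way with $d_1^M$ in place of $d_i^M$, recovering $\ext_A^0 = k_1^{M,M'}$ and hence, together with the formula $\ext_A^i(M,N) = k_{i+1}^{M,N} + k_i^{M,N} - c_i^{M,N}$ and the constancy of $c_i^{M,N}$, the upper semicontinuity of $\ext_A^i(-,?)$.
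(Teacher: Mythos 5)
Your proposal is correct and follows essentially the same route as the paper: the same incidence variety $Z \subseteq X \times V$ with $V = \Hom_K(A^{\otimes i}\otimes K^d, K^{d'})$, the same identification of the fibre with $\Ker(\Hom_A(d_i^M,M'))$, and the same appeal to Lemma~\ref{lem:semicont4}. The only difference is that your preliminary reduction to connected components is unnecessary (every $M\in\md(A,d)$ already has underlying space $K^d$ by the definition in Section~\ref{subsec:varieties}), but this is harmless.
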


\begin{proof}
Let 
\begin{align*}
X &:= \md(A,d) \times \md(A,d'),
\\
V &:= \Hom_K(A^{\otimes i} \otimes K^d,K^{d'}),
\\
Z &:= \{ ((M,M'),f) \in X \times V \mid f \in \Hom_A(A^{\otimes i} \otimes M,M'),\,
f d_i^M = 0 \}.
\end{align*} 
Then $Z$ is closed in $X \times V$.

For $(M,M') \in X$ let
$Z_{(M,M')} := \{ f \in V \mid ((M,M'),f) \in Z \}$.
This is obviously a subspace of $V$ which is isomorphic to
$\Ker(\Hom_A(d_i^M,M'))$.
Thus we can apply Lemma~\ref{lem:semicont4} and get that
$\eta$ is upper semicontinuous.
\end{proof}

\begin{Cor}
For $d,d',i \ge 0$ the map
$$
\md(A,d) \times \md(A,d') \to \Z,\quad 
(M,M') \mapsto \ext_A^i(M,M')
$$
is upper semicontinuous.
\end{Cor}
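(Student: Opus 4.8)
The plan is to derive the upper semicontinuity of $\ext_A^i(-,?)$ directly from the formula
$$
\ext_A^i(M,N) = k_{i+1}^{M,N} + k_i^{M,N} - c_i^{M,N}
$$
established just above (together with the case $i=0$, where $\ext_A^0(M,N) = k_1^{M,N}$). The two maps $(M,M') \mapsto k_{i+1}^{M,M'}$ and $(M,M') \mapsto k_i^{M,M'}$ are upper semicontinuous by Lemma~\ref{lem:semicont5} (for $i \ge 1$; for the summand $k_i^{M,N}$ when $i = 1$ we are still within the scope of that lemma, and $\ext_A^0 = k_1^{M,N}$ is the same statement). The term $c_i^{M,M'}$ is \emph{constant} on $\md(A,d) \times \md(A,d')$: as observed in Section~\ref{subsec:semicontext}, $A^{\otimes i} \otimes M$ is free of rank $\dim(A^{\otimes(i-1)})\dim(M)$, so $c_i^{M,M'} = \dim(A^{\otimes(i-1)})\, d\, d'$ depends only on $d$ and $d'$.

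First I would fix $d, d', i$ and dispose of the case $i = 0$ immediately by invoking Lemma~\ref{lem:semicont5} with index $1$, since $\ext_A^0(M,M') = k_1^{M,M'}$. For $i \ge 1$, I would write $\ext_A^i(-,?)$ as a sum of the two upper semicontinuous maps $k_{i+1}^{-,?}$ and $k_i^{-,?}$ minus the constant map $(M,M') \mapsto c_i^{M,M'} = \dim(A^{\otimes(i-1)})\, d\, d'$. A sum of finitely many upper semicontinuous integer-valued maps is upper semicontinuous, and subtracting a constant does not affect semicontinuity, so the conclusion follows. One should perhaps spell out the elementary fact that $\eta_1 + \eta_2$ is upper semicontinuous when both $\eta_j$ are: for each $n$, the set $\{x : (\eta_1+\eta_2)(x) \le n\}$ is the union over pairs $(a,b) \in \Z^2$ with $a + b \le n$ of the finite intersections... actually more cleanly, $\{x : \eta_1(x) + \eta_2(x) \le n\} = \bigcup_{a \in \Z} \big(\{x : \eta_1(x) \le a\} \cap \{x : \eta_2(x) \le n - a\}\big)$, a union of open sets, hence open; alternatively, since the $\eta_j$ are bounded below on the (finite-type) variety, only finitely many $a$ contribute.

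There is really no serious obstacle here: all the work has been front-loaded into the computation of $\ext_A^i(M,N)$ in terms of the $k_j^{M,N}$ and $c_i^{M,N}$, and into Lemma~\ref{lem:semicont5}, which in turn rests on Lemma~\ref{lem:semicont4}. The only mild subtlety worth a sentence is the constancy of $c_i^{M,M'}$ — i.e.\ that $A^{\otimes i} \otimes_K M$ is a free $A$-module whose rank depends only on $\dim M$, not on the module structure — but this is already recorded in Section~\ref{subsec:semicontext}. So the proof is a two-line assembly: cite the formula, cite Lemma~\ref{lem:semicont5} for the two kernel-dimension terms, note the last term is constant, and conclude by additivity of upper semicontinuity.
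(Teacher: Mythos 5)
Your proposal is correct and is exactly the paper's argument: the paper's proof of this corollary is a one-line citation of Lemma~\ref{lem:semicont5} together with the identity $\ext_A^i(M,N) = k_{i+1}^{M,N} + k_i^{M,N} - c_i^{M,N}$ (and $\ext_A^0 = k_1$) from the start of Section~\ref{subsec:semicontext}, which is precisely what you assemble, just with the elementary additivity of upper semicontinuity spelled out.
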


\begin{proof}
This follows directly from Lemma~\ref{lem:semicont5} and the considerations at the beginning of Section~\ref{subsec:semicontext}.
\end{proof}

\subsection{$g$-vectors and $E$-invariants}\label{subsec:gvectors}
Let $n = n(A)$, and let $S(1),\ldots,S(n)$ be the simple $A$-modules,
up to isomorphism.
For $M \in \md(A)$ and $1 \le i \le n$ recall that
$$
g_i(M) = - \hom_A(M,S(i)) + \ext_A^1(M,S(i)).
$$

Let
$P(1),\ldots,P(n)$ be the indecomposable
projective $A$-modules with $\tp(P(i)) \cong S(i)$ for
$1 \le i \le n$.
Note that for $M \in \md(A)$ we have 
$$
\hom_A(P(i),M) = [M:S(i)].
$$
Each finite-dimensional projective $A$-module $P$ is of the
form
$$
P \cong P(1)^{a_1} \oplus \cdots \oplus P(n)^{a_n}
$$
for some uniquely determined $a_1,\ldots,a_n \ge 0$.
For $1 \le i \le n$ let
$$
[P:P(i)] := a_i.
$$

For $M \in \md(A)$ let
$$
P_1 \xrightarrow{p_1} P_0 \xrightarrow{p_0} M \to 0
$$
be a minimal projective presentation of $M$.

The following lemma is straightforward.

\begin{Lem}
For $1 \le i \le n$ we have
$$
g_i(M) = [P_1:P(i)] - [P_0:P(i)].
$$
\end{Lem}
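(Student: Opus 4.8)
The plan is to express both $g_i(M)$ and $[P_1:P(i)]-[P_0:P(i)]$ in terms of Hom-spaces into the simple module $S(i)$ and then compare. First I would record the two standard identities: for any projective $P \cong \bigoplus_j P(j)^{a_j}$ and any $M \in \md(A)$ one has $\hom_A(P,M) = \sum_j a_j [M:S(j)]$, because $\Hom_A(P(j),M)$ has dimension $[M:S(j)]$; in particular $\hom_A(P,S(i)) = a_i d_{S(i)} = [P:P(i)] \cdot d_{S(i)}$ since $[S(i):S(j)] = \delta_{ij}$. (If one prefers to avoid the factor $d_{S(i)}$, one works instead with $\Hom_A(P(i),M)$ as a right $\End_A(S(i))$-module, or simply absorbs the constant; I would state the lemma and its proof for the version without the $d_{S(i)}$ ambiguity, noting that $\Hom_A(P(i),S(i)) \cong \End_A(S(i))$ and using the appropriate normalization — the cleanest route is to observe $\dim\Hom_A(P(j),S(i)) = \delta_{ij}\dim\End_A(S(i))$ and divide through, or else just assume the standard setup where $A$ is basic so $d_{S(i)}=1$.)

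Next I would apply $\Hom_A(-,S(i))$ to the minimal projective presentation $P_1 \xrightarrow{p_1} P_0 \xrightarrow{p_0} M \to 0$. Minimality means $p_1$ and $p_0$ have images inside the radicals of $P_0$ and $M$ respectively, hence $\Hom_A(p_1, S(i)) = 0$ and the induced map $\Hom_A(p_0,S(i))$ is injective with cokernel mapping onto $\Ext_A^1(M,S(i))$. Concretely, from the four-term exact sequence
\[
0 \to \Hom_A(M,S(i)) \to \Hom_A(P_0,S(i)) \xrightarrow{\Hom_A(p_1,S(i))=0} \Hom_A(P_1,S(i)) \to \Ext_A^1(M,S(i)) \to 0
\]
(here I use that $\Ext_A^1(M,S(i))$ is the cohomology at the $\Hom_A(P_1,S(i))$ spot of the complex obtained from the presentation — valid because the presentation extends to a minimal projective resolution and the higher terms do not affect this cohomology group). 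Taking dimensions gives $\hom_A(M,S(i)) = \hom_A(P_0,S(i))$ and $\ext_A^1(M,S(i)) = \hom_A(P_1,S(i))$, because the connecting map vanishes. Then by the first step $\hom_A(P_0,S(i)) = [P_0:P(i)]$ and $\hom_A(P_1,S(i)) = [P_1:P(i)]$, so
\[
g_i(M) = -\hom_A(M,S(i)) + \ext_A^1(M,S(i)) = -[P_0:P(i)] + [P_1:P(i)],
\]
which is the claim.

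The main obstacle — really the only subtle point — is justifying that the map $\Hom_A(P_1,S(i)) \to \Ext_A^1(M,S(i))$ is surjective, i.e. that the cohomology of the truncated complex coming from a minimal projective \emph{presentation} (not the full resolution) already computes $\Ext^1$. This is where minimality matters: extend $P_1 \xrightarrow{p_1} P_0 \to M \to 0$ to a minimal projective resolution $\cdots \to P_2 \xrightarrow{p_2} P_1 \xrightarrow{p_1} P_0 \to M \to 0$; then $\Ext_A^1(M,S(i)) = \Ker(\Hom_A(p_2,S(i)))/\Ima(\Hom_A(p_1,S(i)))$, and minimality forces $\Hom_A(p_2,S(i)) = 0$ (the image of $p_2$ lies in $\rad(P_1)$, which is killed by any homomorphism to a semisimple module), so $\Ext_A^1(M,S(i)) = \Hom_A(P_1,S(i))/\Ima(\Hom_A(p_1,S(i)))$; and minimality again forces $\Ima(\Hom_A(p_1,S(i))) = 0$. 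Hence both the left-exactness and the surjectivity fall out of the same minimality observation, and I would present these as one short remark. With that, the dimension count is immediate and the proof is complete; I would keep the whole argument to a few lines, possibly flagging explicitly the basic-algebra normalization if the paper wants to be scrupulous about $d_{S(i)}$.
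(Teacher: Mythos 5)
Your argument is correct and is exactly the ``straightforward'' proof the paper has in mind: minimality of the presentation (extended to a minimal resolution) kills both induced maps on $\Hom_A(-,S(i))$, giving $\hom_A(M,S(i))=\hom_A(P_0,S(i))=[P_0:P(i)]$ and $\ext_A^1(M,S(i))=\hom_A(P_1,S(i))=[P_1:P(i)]$. Your hedging about $d_{S(i)}$ is unnecessary and slightly off: since $K$ is algebraically closed, Schur's lemma gives $\dim\Hom_A(P(j),S(i))=\delta_{ij}\dim\End_A(S(i))=\delta_{ij}$ with no basic-algebra assumption, and the factor you wrote as $d_{S(i)}=\dim_K S(i)$ should in any case be $\dim_K\End_A(S(i))$ --- two quantities that differ in general.
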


Let $Z \in \irr(A)$.
By the upper semicontinuity of the maps $\hom_A(-,S(i))$ and
$\ext_A^1(-,S(i))$, there is a dense open subset
$U$ of $Z$ such the restriction of $g_i(-)$ is constant on $U$ for all
$1 \le i \le n$.
We denote this generic value by $g_i(Z)$.

For $M, N \in \md(A)$ recall that
$$
E(M,N) = \hom_A(N,\tau(M))
\text{\quad and \quad}
E(M) = E(M,M).
$$

Applying $\Hom_A(-,N)$ to the minimal projective presentation of $M$ (see above)
we get an exact sequence
\[
0 \to \Hom_A(M,N) \xrightarrow{\Hom_A(p_0,N)} 
\Hom_A(P_0,N) \xrightarrow{\Hom_A(p_1,N)} \Hom_A(P_1,N).
\]
By \cite[Chapter IV, Corollary~4.3]{ARS97} we have
\[
\Coker(\Hom_A(p_1,N)) \cong \Hom_A(N,\tau_A(M)).
\]
In particular, we get
\begin{align*}
E(M,N) &= \hom_A(M,N) - \hom_A(P_0,N) + \hom_A(P_1,N)
\\
&= \hom_A(M,N) - \sum_{i=1}^n [P_0:P(i)][N:S(i)] + 
\sum_{i=1}^n [P_1:P(i)][N:S(i)]
\\
&= \hom_A(M,N) + \sum_{i=1}^n g_i(M)[N:S(i)].
\end{align*}

Let $Z,Z' \in \irr(A)$.
By the upper semicontinuity of $\hom_A(-,?)$, there is a dense open subset
$U$ of $Z \times Z'$ (resp. $Z$) such the restriction of $E(-,?)$ (resp. $E(-)$) is constant on $U$.
We denote this generic value by $E(Z,Z')$ (resp. $E(Z)$).

\subsection{Direct sums of irreducible components}\label{subsec:directsums}
For $1 \le i \le t$ let $Z_i \in \irr(A,d_i)$.
Let $d = d_1 + \cdots + d_t$ and define
\begin{align*}
\eta\df G_d \times Z_1 \times \cdots \times Z_t &\to 
\md(A,d)
\\
(g,M_1,\ldots,M_t) &\mapsto g.(M_1 \oplus \cdots \oplus M_t).
\end{align*}
The image of $\eta$ is denoted by $Z_1 \oplus \cdots \oplus Z_t$.
The Zariski closure $\overline{Z_1 \oplus \cdots \oplus Z_t}$ is 
an irreducible closed subset of $\md(A,d)$, but in
general it is not an irreducible component of $\md(A,d)$.

\begin{Thm}[{\cite[Theorem~1.2]{CBS02}}]\label{thm:decomp1}
For $Z_1,\ldots,Z_t \in \irr(A)$ the following are equivalent:
\begin{itemize}\itemsep2mm

\item[(i)]
$\overline{Z_1 \oplus \cdots \oplus Z_t} \in \irr(A)$;

\item[(ii)]
$\ext_A^1(Z_i,Z_j) = 0$ for all $1 \le i,j \le n$ with $i \not= j$.

\end{itemize}
\end{Thm}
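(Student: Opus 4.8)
\textbf{Proof proposal for Theorem~\ref{thm:decomp1}.}

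The plan is to reduce the statement to a statement about the fibre dimension of the action map $\eta\colon G_d \times Z_1 \times \cdots \times Z_t \to \md(A,d)$. First I would recall the standard dimension count for the closure of the ``extension-free'' orbit-sum locus. Over a generic point $(M_1,\ldots,M_t)$ of $Z_1 \times \cdots \times Z_t$, the module $M := M_1 \oplus \cdots \oplus M_t$ has $\End_A(M)$ of generic (minimal) dimension, and the fibre of $\eta$ through $(1,M_1,\ldots,M_t)$ is, up to the stabiliser of $M$, governed by $\End_A(M)$. A direct computation gives
\[
\dim \overline{Z_1 \oplus \cdots \oplus Z_t}
= \dim G_d - \dim G_{d_1} - \cdots - \dim G_{d_t}
+ \dim Z_1 + \cdots + \dim Z_t
+ \sum_{i \neq j} \hom_A(Z_i, Z_j).
\]
On the other hand, a theorem of Crawley-Boevey and Schröer (or the canonical decomposition machinery) tells us that $\overline{Z_1 \oplus \cdots \oplus Z_t}$ is an irreducible component precisely when this dimension equals, for generic $M$ in the closure, the expected dimension $\dim G_d - \dim \End_A(M)$ bounded below by the AR-formula. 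So I would compare the generic value of $\dim \End_A(M)$ for $M$ in the closure against $\sum_i \dim \End_A(Z_i) - \sum_{i\neq j}\hom_A(Z_i,Z_j) + \sum_{i\neq j}\hom_A(Z_i,Z_j)$, and here the key input is the homological formula $\hom_A(M,M) - \hom_A(P_0,M) + \hom_A(P_1,M) = E(M) \geq 0$ recalled in Section~\ref{subsec:gvectors}, together with upper semicontinuity of $\ext_A^1(-,?)$ from the Corollary after Lemma~\ref{lem:semicont5}.

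The forward implication (i) $\Rightarrow$ (ii): if $\overline{Z_1 \oplus \cdots \oplus Z_t}$ is an irreducible component, then since every module in it degenerates from a generic direct sum, every module in it is decomposable with summands degenerating into $Z_1,\ldots,Z_t$; a nonzero generic self-extension between $Z_i$ and $Z_j$ for $i \neq j$ would produce a module of the same dimension vector not lying in the closure but with the closure of its orbit meeting a larger irreducible set, contradicting maximality. More cleanly, I would invoke the inequality $\dim \overline{Z_1 \oplus \cdots \oplus Z_t} \leq \dim Z$ for the component $Z$ containing it and the dimension formula above: if some $\ext_A^1(Z_i,Z_j) > 0$ then one can construct, via a nonsplit extension of generic modules, a deformation of $M$ whose orbit closure has strictly larger dimension, so $\overline{Z_1 \oplus \cdots \oplus Z_t}$ is properly contained in a component. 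The backward implication (ii) $\Rightarrow$ (i): assuming $\ext_A^1(Z_i,Z_j) = 0$ for all $i \neq j$, one shows the generic fibre of $\eta$ has dimension exactly $\dim G_d - \sum_i \dim G_{d_i} + \sum_i \eend_A(Z_i) - \dim\End_A(M) + \sum_{i\neq j}\hom_A(Z_i,Z_j)$ computed from $\End_A(M_1\oplus\cdots\oplus M_t) = \bigoplus_{i,j}\Hom_A(M_i,M_j)$, and that this forces $\dim \overline{Z_1\oplus\cdots\oplus Z_t}$ to equal the maximal possible value dictated by the number-of-parameters bound, hence it is a component.

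The main obstacle I expect is the backward direction: ruling out the possibility that $\overline{Z_1 \oplus \cdots \oplus Z_t}$, though of the ``right'' dimension, is still strictly contained in a larger irreducible component of $\md(A,d)$. Handling this requires showing that generically along $\overline{Z_1 \oplus \cdots \oplus Z_t}$ the module $M$ is ``rigid relative to the decomposition'' — i.e.\ that no small deformation of $M$ within $\md(A,d)$ escapes the closure — which is exactly where $\ext_A^1(Z_i,Z_j)=0$ must be used to kill the normal directions to the locus. The cleanest route is to cite \cite[Theorem~1.2]{CBS02} directly (as the excerpt already attributes the statement to it), but if a self-contained argument is wanted, one proves that the tangent space to $\md(A,d)$ at a generic $M$ has dimension $\dim G_d - \eend_A(M) + \ext_A^1(M,M)$, decompose $\ext_A^1(M,M) = \sum_i \ext_A^1(M_i,M_i) + \sum_{i\neq j}\ext_A^1(M_i,M_j)$, observe the cross terms vanish by hypothesis, and match the remaining terms with the tangent spaces to the individual $Z_i$'s to conclude $\overline{Z_1\oplus\cdots\oplus Z_t}$ is generically reduced of full dimension, hence a component.
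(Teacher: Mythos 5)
The paper itself offers no proof of this theorem: it is stated in the section of known results and attributed verbatim to \cite[Theorem~1.2]{CBS02}, so there is nothing internal to compare against. Judged on its own, your sketch contains a concrete error and a genuine gap. The error: your dimension formula has the wrong sign. A fibre count for $\eta$ (the fibre over a point with underlying module $M_1\oplus\cdots\oplus M_t$ has dimension $\sum_i\dim G_{d_i}+\sum_{i\neq j}\hom_A(M_i,M_j)$, coming from $\dim\End_A(\bigoplus_i M_i)$ together with the orbit dimensions of the $M_i$) yields
\[
\dim\overline{Z_1\oplus\cdots\oplus Z_t}=\dim G_d-\sum_i\dim G_{d_i}+\sum_i\dim Z_i-\sum_{i\neq j}\hom_A(Z_i,Z_j),
\]
i.e.\ the cross hom-terms are subtracted, not added. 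The gap: the direction (ii)$\Rightarrow$(i), which you rightly identify as the hard one, is not proved. Your ``self-contained'' route rests on the claim that the tangent space of $\md(A,d)$ at a generic $M$ has dimension $\dim G_d-\eend_A(M)+\ext_A^1(M,M)$, and that matching dimensions forces componenthood. But Voigt's Lemma only gives the inequality $c(Z)\le\ext_A^1(Z)$ (as recalled in Section~\ref{subsec:directsums}), which is strict in general because $\md(A,d)$ need not be reduced as a scheme; so neither the tangent-space identity nor the conclusion ``generically reduced of full dimension, hence a component'' holds. Your other suggestion (``a theorem of Crawley-Boevey and Schr\"oer tells us precisely when\dots'') is circular, since that theorem is the one being proved.

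For (i)$\Rightarrow$(ii) your idea is right in spirit (a nonzero generic $\Ext^1$ between distinct factors produces a strictly larger irreducible set via non-split extensions), but the decisive step is missing: you must show that a non-split middle term $E$ of an extension of a generic $M_i$ by a generic $M_j$ does \emph{not} lie in $\overline{Z_1\oplus\cdots\oplus Z_t}$. ``A deformation whose orbit closure has strictly larger dimension'' does not by itself contradict maximality. The standard argument uses that $\dim\End_A(E)<\dim\End_A(M_i\oplus M_j)$ for a non-split extension, while by upper semicontinuity the generic (i.e.\ minimal) value of $\eend_A(-)$ on $\overline{Z_1\oplus\cdots\oplus Z_t}$ is attained at the generic direct sums; one also needs that the closure of the extension locus is irreducible and contains the direct-sum locus because every such $E$ degenerates to $M_i\oplus M_j$. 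To make either direction rigorous you should follow \cite{CBS02} directly, or the analysis of extensions and degenerations in \cite{B96}; as written, the proposal is not a proof.
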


For $Z \in \irr(A)$ and $m \ge 1$ let
$Z^m := Z \oplus \cdots \oplus Z$ be the $m$-fold direct sum
of $Z$.
One calls $Z$ \emph{indecomposable} if it contains a dense subset of indecomposable modules.
Each $Z \in \irr(A)$ can be written (in a unique way) as a direct sum
$$
Z = \overline{Z_1^{a_1} \oplus \cdots \oplus Z_t^{a_t}}
$$
where $Z_1,\ldots,Z_t$ are pairwise different indecomposable components
and $a_1,\ldots,a_t \ge 1$.
We refer to \cite{CBS02} for details.
Let $\Sigma(Z) := t$.

For $Z \in \irr(A)$
Voigt's Lemma (see for example \cite[Proposition~1.1]{Ga74}) and the Auslander-Reiten formulas
(see for example \cite[Section~2.4]{R84}) imply that
$$
c(Z) \le \ext_A^1(Z) \le E(Z).
$$
Recall that
$\irr^\tau(Z) := \{ Z \in \irr(A) \mid
c(Z) = E(Z) \}$ are the \emph{generically $\tau$-reduced} components.

Recall that $M \in \md(A)$ is \emph{rigid} (resp. \emph{$\tau$-rigid}) if
$\Ext_A^1(M,M) = 0$ (resp. $\Hom_A(M,\tau_A(M)) = 0$).
By the Auslander-Reiten formulas, any $\tau$-rigid module is
rigid, wheras the converse is wrong in general.
If $M$ is $\tau$-rigid, then $Z = \overline{\cO_M} \in \irr^\tau(A)$.

A systematic study of $\tau$-rigid modules was initiated in \cite{AIR14}.
These modules are important for the additive categorification
of Fomin-Zelevinsky cluster algebras and they also lead to a broad generalization of classical tilting theory.

\begin{Thm}[{\cite[Theorem~1.3]{CLS15}}]\label{thm:decomp2}
For $Z_1,\ldots,Z_t \in \irr^\tau(A)$ the following are equivalent:
\begin{itemize}\itemsep2mm

\item[(i)]
$\overline{Z_1 \oplus \cdots \oplus Z_t} \in \irr^\tau(A)$;

\item[(ii)]
$E(Z_i,Z_j) = 0$ for all $1 \le i,j \le n$ with $i \not= j$.

\end{itemize}
\end{Thm}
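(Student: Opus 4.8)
The plan is to prove Theorem~\ref{thm:decomp2} by reducing it, via the homological description of the $E$-invariant established in Section~\ref{subsec:gvectors}, to the corresponding statement for $\ext_A^1$, namely Theorem~\ref{thm:decomp1}. Recall from Section~\ref{subsec:gvectors} that for $M,N\in\md(A)$ one has $E(M,N)=\hom_A(M,N)+\sum_{i=1}^n g_i(M)[N:S(i)]$. The key point is that the $g$-vector $g_i(M)$ depends (as a generic value) only on the component $Z$ containing $M$, and that $[N:S(i)]=\dimv_i(Z')$ depends only on the component $Z'$ containing $N$. So if we fix components $Z_i\in\irr^\tau(A,d_i)$ and write $Z=\overline{Z_1\oplus\cdots\oplus Z_t}$, then the generic value of $E$ on $Z$ decomposes additively: for a generic point $M=M_1\oplus\cdots\oplus M_t$ of $Z$ we get $E(M)=\sum_{j,k}E(M_j,M_k)$, hence generically $E(Z)=\sum_{j,k}E(Z_j,Z_k)$, using upper semicontinuity of $E(-,?)$ (Corollary~\ref{cor3:main1}) to pass from generic-on-the-product to generic-on-$Z$.

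The main structural input is the number-of-parameters count. For the direct sum one has the standard formula $c(Z)=\sum_{j=1}^t c(Z_j)+\sum_{j\neq k}\big(\hom_A(Z_j,Z_k)-\langle \dimv(Z_j),\dimv(Z_k)\rangle_{\text{part}}\big)$-type expression; more precisely, using that $\dim(Z_1\oplus\cdots\oplus Z_t)=\sum\dim Z_j+\sum_{j\neq k}(d_jd_k-\hom_A(Z_j,Z_k))$ and the dimension of a generic orbit in $Z$, one obtains $c(Z)=\sum_{j=1}^t c(Z_j)+\sum_{j<k}\big(\ext_A^1(Z_j,Z_k)+\ext_A^1(Z_k,Z_j)\big)$ when the $Z_j$ are generically $\tau$-reduced — here the Auslander–Reiten formula $\ext_A^1(M,N)=\hom_A(N,\tau M)-(\text{projective correction})$ together with the definition of $E$ is what links the parameter count to $E$. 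This is essentially the content of \cite[Theorem~1.3]{CLS15}; I would cite the relevant lemmas on $c(\overline{Z_1\oplus\cdots\oplus Z_t})$ rather than rederive them.

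Granting these two decompositions, the equivalence is immediate. For (ii)$\Rightarrow$(i): if $E(Z_j,Z_k)=0$ for all $j\neq k$, then in particular (by the AR formula, since $\ext_A^1(Z_j,Z_k)\le E(Z_j,Z_k)=0$) we get $\ext_A^1(Z_j,Z_k)=0$, so $c(Z)=\sum_j c(Z_j)$; on the other hand $E(Z)=\sum_j E(Z_j)+\sum_{j\neq k}E(Z_j,Z_k)=\sum_j E(Z_j)=\sum_j c(Z_j)$ since each $Z_j$ is generically $\tau$-reduced. Hence $c(Z)=E(Z)$ and, since $c(Z)\le\ext_A^1(Z)\le E(Z)$ always, $Z\in\irr^\tau(A)$; moreover $\ext_A^1(Z_j,Z_k)=0$ gives $Z\in\irr(A)$ by Theorem~\ref{thm:decomp1}, so (i) holds. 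For (i)$\Rightarrow$(ii): if $Z\in\irr^\tau(A)$ then $c(Z)=E(Z)$; using $c(Z)=\sum_j c(Z_j)+\sum_{j<k}(\ext_A^1(Z_j,Z_k)+\ext_A^1(Z_k,Z_j))$ and $E(Z)=\sum_j E(Z_j)+\sum_{j\neq k}E(Z_j,Z_k)$, subtracting and using $c(Z_j)=E(Z_j)$ and $\ext_A^1\le E$ term by term forces every $\ext_A^1(Z_j,Z_k)=0$ and every $E(Z_j,Z_k)=0$ for $j\neq k$.

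The hard part is not the algebra of these identities but making the generic-value bookkeeping rigorous: one must ensure that there is a single dense open subset of $Z$ on which $E(-)$, all the $g_i(-)$, and the splitting $M\cong M_1\oplus\cdots\oplus M_t$ with $M_j$ generic in $Z_j$ simultaneously take their generic behaviour. This is where Corollary~\ref{cor3:main1} (upper semicontinuity of $E(-,?)$ and $E(-)$) is essential, together with the fact that the image of $\eta\colon G_d\times Z_1\times\cdots\times Z_t\to\md(A,d)$ is constructible and dense in $\overline{Z_1\oplus\cdots\oplus Z_t}$, so generic points of $Z$ are indeed of the form $g.(M_1\oplus\cdots\oplus M_t)$. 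I would also need the standard fact (Voigt's Lemma plus the AR formulas, as quoted just before the theorem) that $c(W)\le\ext_A^1(W)\le E(W)$ for every $W\in\irr(A)$, applied both to $Z$ and to the $Z_j$, to run the inequalities above. Given all of this, the proof is a short assembly of Theorem~\ref{thm:decomp1}, the parameter-count formula from \cite{CLS15}, and the additivity of $E$.
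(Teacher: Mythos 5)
The paper itself gives no proof of Theorem~\ref{thm:decomp2}: it is quoted from \cite[Theorem~1.3]{CLS15}, with only the remark that the proof relies on Theorem~\ref{thm:decomp1}. So your proposal can only be measured against the argument of \cite{CLS15}, whose overall architecture you have correctly identified: additivity of the $E$-invariant over direct sums (which follows from $E(M,N)=\hom_A(N,\tau(M))$ and additivity of $\tau$, and passes to generic values because $\Ima(\eta)$ is constructible and dense in $\overline{Z_1\oplus\cdots\oplus Z_t}$), a parameter count for $c(\overline{Z_1\oplus\cdots\oplus Z_t})$, Theorem~\ref{thm:decomp1}, and the chain $c\le\ext_A^1\le E$.

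There is, however, a genuine error in your key formula, and it breaks the direction (i)$\Rightarrow$(ii). The correct parameter count, which is what \cite{CBS02} and \cite{CLS15} actually use, is $c(\overline{Z_1\oplus\cdots\oplus Z_t})=\sum_j c(Z_j)$ with \emph{no} correction terms: in the generic fibre of $\eta$ the contribution $\dim{\rm Aut}_A(M)=\sum_{j,k}\hom_A(M_j,M_k)$ cancels exactly against the corresponding term in $\dim\cO_M$, so all cross-terms drop out. Your formula $c(Z)=\sum_j c(Z_j)+\sum_{j<k}\bigl(\ext_A^1(Z_j,Z_k)+\ext_A^1(Z_k,Z_j)\bigr)$ is false; for instance, for $A=K[x]/(x^2)$ and $Z_1=Z_2=\md(A,1)=\overline{\cO_S}$ one has $\overline{Z_1\oplus Z_2}=\overline{\cO_{S\oplus S}}=\{0\}$, so $c=0$, whereas your formula predicts $2$ because $\ext_A^1(S,S)=1$. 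With your formula, the subtraction in (i)$\Rightarrow$(ii) only yields $\sum_{j\ne k}\ext_A^1(Z_j,Z_k)=\sum_{j\ne k}E(Z_j,Z_k)$, hence, by $\ext_A^1\le E$ termwise, $\ext_A^1(Z_j,Z_k)=E(Z_j,Z_k)$ for all $j\ne k$ --- which does not force these numbers to vanish; your concluding sentence there is a non sequitur. With the correct formula the argument is both valid and simpler: $c(Z)=E(Z)$ becomes $\sum_j c(Z_j)=\sum_j E(Z_j)+\sum_{j\ne k}E(Z_j,Z_k)$, and $c(Z_j)=E(Z_j)$ leaves $\sum_{j\ne k}E(Z_j,Z_k)=0$, whence each summand vanishes by nonnegativity. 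Your direction (ii)$\Rightarrow$(i) is unaffected (there the cross-terms vanish anyway), and the semicontinuity bookkeeping you describe is sound.
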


The proof of Theorem~\ref{thm:decomp2} relies on Theorem~\ref{thm:decomp1}.

Note that in Theorems~\ref{thm:decomp1} and \ref{thm:decomp2} we do not assume that the components $Z_1,\ldots,Z_t$ are pairwise different.

By the Auslander-Reiten formulas, the condition
$E(Z_i,Z_j) = 0$ implies that $\ext_A^1(Z_i,Z_j) = 0$.

\begin{Thm}[{\cite[Theorem~6.1]{CLS15}}]\label{prop:numberofsummands}
Let $Z \in \irr^\tau(A)$ with $E(Z,Z) = 0$.
Then $\Sigma(Z) \le n(A)$.
\end{Thm}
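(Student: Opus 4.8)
The plan is to bound $\Sigma(Z)$ by showing that the generic $g$-vectors of the indecomposable direct summands of $Z$ are linearly independent over $\Z$, and then to invoke Plamondon's parametrization of $\irr^\tau(A)$. Note that one cannot simply reduce to a $\tau$-rigid module: a component $Z$ with $E(Z,Z)=0$ need not contain a dense orbit (equivalently, by Theorem~\ref{thm:main2}, $E(Z)$ may be strictly positive), so $Z$ may contain no $\tau$-rigid module at all.

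Write $Z = \overline{Z_1^{a_1} \oplus \cdots \oplus Z_t^{a_t}}$ with $Z_1,\dots,Z_t$ pairwise different indecomposable components and $a_i \ge 1$, so $\Sigma(Z)=t$; by the decomposition theory of generically $\tau$-reduced components (\cite{CLS15}, \cite{GLS12}) each $Z_i$ again lies in $\irr^\tau(A)$. First I would prove that $E(Z_i,Z_j)=0$ for \emph{all} $1\le i,j\le t$, including $i=j$. Since $E(-,?)$ is upper semicontinuous (Corollary~\ref{cor3:main1}) and $Z$ is by definition the closure of the image of the direct-sum map $\eta$ of Section~\ref{subsec:directsums}, the set $\Ima(\eta)\times\Ima(\eta)$ is dense in $Z\times Z$, so I can pick a point $(M,M')\in Z\times Z$ with $E(M,M')=0$ and with $M\cong N_1\oplus\cdots\oplus N_s$, $M'\cong N_1'\oplus\cdots\oplus N_s'$, where exactly $a_i$ of the $N_k$ and $a_i$ of the $N_l'$ lie in $Z_i$. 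Using $\tau(M')\cong\bigoplus_l\tau(N_l')$ one gets $0=E(M,M')=\hom_A(M',\tau(M))=\sum_{k,l}\hom_A(N_l',\tau(N_k))=\sum_{k,l}E(N_k,N_l')$; as every summand is $\ge 0$, each vanishes, and choosing $k,l$ with $N_k\in Z_i$ and $N_l'\in Z_j$ (possible since $a_i,a_j\ge 1$) gives $E(Z_i,Z_j)\le E(N_k,N_l')=0$, hence $E(Z_i,Z_j)=0$.

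Next I would build a $\Z$-parametrized family inside $\irr^\tau(A)$. Since $Z_1,\dots,Z_t\in\irr^\tau(A)$ and $E(Z_i,Z_j)=0$ for all $i,j$ (the case $i=j$ is needed because repeated summands are allowed in Theorem~\ref{thm:decomp2}), that theorem shows $Z(\bb):=\overline{Z_1^{b_1}\oplus\cdots\oplus Z_t^{b_t}}\in\irr^\tau(A)$ for every $\bb=(b_1,\dots,b_t)\in\N^t$. Because $g$-vectors are additive on direct sums and the generic point of $Z(\bb)$ is (up to isomorphism) a direct sum of $g$-generic modules from the $Z_i$ with multiplicities $b_i$, we have $g(Z(\bb))=\sum_i b_i\,g(Z_i)$; moreover $\bb\mapsto Z(\bb)$ is injective by uniqueness of the decomposition into indecomposables. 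Now suppose $\sum_i c_i\,g(Z_i)=0$ for some $\bc=(c_i)\in\Z^t\setminus\{0\}$; writing $\bc=\bc^+-\bc^-$ with $\bc^\pm\in\N^t$ and $\bc^+\ne\bc^-$ gives $g(Z(\bc^+))=\sum_i c_i^+\,g(Z_i)=\sum_i c_i^-\,g(Z_i)=g(Z(\bc^-))$ although $Z(\bc^+)\ne Z(\bc^-)$, contradicting the injectivity of the generic $g$-vector map $\irr^\tau(A)\to\Z^n$ (Plamondon, \cite[Theorem~1.2]{P13}). Hence $g(Z_1),\dots,g(Z_t)$ are $\Z$-linearly independent in $\Z^n$, so $t\le n=n(A)$.

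The main point requiring care is the hypothesis-checking in the third paragraph: that $E(Z_i,Z_j)=0$ holds for \emph{every} ordered pair (so that Theorem~\ref{thm:decomp2} applies to direct sums of arbitrarily large multiplicity) and that each indecomposable summand $Z_i$ is itself generically $\tau$-reduced, together with the genericity bookkeeping behind the choice of $(M,M')$ and behind the identity $g(Z(\bb))=\sum_i b_i\,g(Z_i)$. The appeal to Plamondon's parametrization is the essential external input; everything else is upper semicontinuity (Corollary~\ref{cor3:main1}) and the structure theory of Section~\ref{subsec:directsums}.
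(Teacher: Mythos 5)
The paper does not actually prove this statement --- it is imported verbatim from \cite[Theorem~6.1]{CLS15} --- but your argument is correct and is essentially the original one: reduce to the vanishing of $E(Z_i,Z_j)$ for \emph{all} pairs (including $i=j$) via upper semicontinuity and additivity of $E$ on a generic point of $\Ima(\eta)\times\Ima(\eta)$, use Theorem~\ref{thm:decomp2} to produce the family $Z(\bb)\in\irr^\tau(A)$ for all $\bb\in\N^t$, and deduce $\Z$-linear independence of the $g(Z_i)$ from the injectivity of Plamondon's generic $g$-vector parametrization. The hypothesis-checking you flag (each $Z_i\in\irr^\tau(A)$, the need for $E(Z_i,Z_i)=0$ because of repeated summands, and the genericity bookkeeping behind $g(Z(\bb))=\sum_i b_i\,g(Z_i)$) is handled correctly, so there is nothing to object to.
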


It is an open problem if $\Sigma(Z) \le n(A)$ for all
$Z \in \irr^\tau(A)$.


\section{Proofs of the main results}\label{sec:proofs}


\subsection{Semicontinuity of truncated Euler maps}
\label{subsec:truncated}
\begin{proof}[Proof of Theorem~\ref{thm:main1}]
Recall that for $t \ge 0$ and $M,M' \in \md(A)$ we defined
$$
\eta_t(M,M') = \sum_{i=0}^t (-1)^i \ext_A^i(M,M').
$$
We use the notation from Section~\ref{subsec:semicontext}.
For convenience, we set $c_0^{M,M'} := 0$ and define
$$
c^{M,M'} := \sum_{i=0}^t (-1)^{i+1} c_i^{M,M'}.
$$
This is a constant if we fix $\dim(M)$ and $\dim(M')$.

We get
$$
\eta_t(M,M') = 
\begin{cases}
k_{t+1}^{M,M'} + c^{M,M'} & \text{if $t$ is even},
\\
-k_{t+1}^{M,M'} + c^{M,M'} & \text{if $t$ is odd}.
\end{cases}
$$
Now Lemma~\ref{lem:semicont5} implies that
$\eta_t(-,?)$ is upper semicontinuous if $t$ is even and lower
semicontinuous if $t$ is odd.
This finishes the proof.
\end{proof}

\subsection{Upper semicontinuity of $\hom_A(\Omega^j(-),?)$}
\label{subsec:omega}
This section contains the proof of Corollary~\ref{cor1:main1}.
For $M \in \md(A)$ let 
$$
\cdots \xrightarrow{p_3} P_2 \xrightarrow{p_2} P_1 
\xrightarrow{p_1} P_0 
\xrightarrow{p_0} M \to 0
$$
be a minimal projective resolution of $M$.
For $j \ge 0$ we have $\Ima(p_j) = \Omega^j(M)$.

For $j \ge 0$ and $M' \in \md(A)$ define
$$
p_j^{M,M'} := \sum_{i=0}^{j-1} (-1)^i\hom_A(P_{j-1-i},M'). 
$$
(For $j = 0$ we have $p_j^{M,M'} = 0$.)

\begin{Lem}\label{lem:projmult4}
For $d,d',j \ge 0$ we have
$$
\hom_A(\Omega^j(M),M') = 
p_j^{M,M'} +
(-1)^j\eta_j(M,M').
$$.
\end{Lem}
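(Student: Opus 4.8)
The plan is to prove the identity by induction on $j$, extracting short exact sequences from the minimal projective resolution of $M$ and applying $\Hom_A(-,M')$. For $j=0$ both sides equal $\hom_A(M,M')$: by convention $p_0^{M,M'}=0$, and $(-1)^0\eta_0(M,M')=\ext_A^0(M,M')=\hom_A(M,M')$.

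For the inductive step, fix $j\ge 0$ and use the short exact sequence $0\to\Omega^{j+1}(M)\to P_j\to\Omega^j(M)\to 0$. Applying $\Hom_A(-,M')$ and using $\Ext_A^1(P_j,M')=0$ (projectivity of $P_j$) gives the four-term exact sequence
\[
0\to\Hom_A(\Omega^j(M),M')\to\Hom_A(P_j,M')\to\Hom_A(\Omega^{j+1}(M),M')\to\Ext_A^1(\Omega^j(M),M')\to 0,
\]
so that
\[
\hom_A(\Omega^{j+1}(M),M')=\hom_A(P_j,M')-\hom_A(\Omega^j(M),M')+\ext_A^1(\Omega^j(M),M').
\]
By the syzygy–Ext isomorphism $\Ext_A^i(\Omega^j(M),M')\cong\Ext_A^{i+j}(M,M')$ recalled in the introduction (taking $i=1$), we have $\ext_A^1(\Omega^j(M),M')=\ext_A^{j+1}(M,M')$ for $j\ge 1$; this also holds for $j=0$ trivially.

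It then remains to substitute the induction hypothesis $\hom_A(\Omega^j(M),M')=p_j^{M,M'}+(-1)^j\eta_j(M,M')$ and compare with $p_{j+1}^{M,M'}+(-1)^{j+1}\eta_{j+1}(M,M')$. A reindexing of the defining sums yields the recursions $p_{j+1}^{M,M'}=\hom_A(P_j,M')-p_j^{M,M'}$ and $\eta_{j+1}(M,M')=\eta_j(M,M')+(-1)^{j+1}\ext_A^{j+1}(M,M')$; inserting these into the target expression reproduces exactly the displayed formula for $\hom_A(\Omega^{j+1}(M),M')$. Equivalently, one may unroll the recursion directly to the closed form
\[
\hom_A(\Omega^j(M),M')=\sum_{k=1}^j(-1)^{j-k}\bigl(\hom_A(P_{k-1},M')+\ext_A^k(M,M')\bigr)+(-1)^j\hom_A(M,M'),
\]
and recognize the two sums as $p_j^{M,M'}$ and $(-1)^j\eta_j(M,M')$. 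The only point requiring care is the sign and index bookkeeping; conceptually there is no real obstacle, as the argument reduces entirely to the standard dimension-shifting long exact sequence together with the projectivity of the $P_j$.
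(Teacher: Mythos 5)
Your proof is correct and follows essentially the same route as the paper: both extract the short exact sequence $0\to\Omega^{j+1}(M)\to P_j\to\Omega^j(M)\to 0$ from the minimal projective resolution, apply $\Hom_A(-,M')$ together with the syzygy--Ext isomorphism to obtain the recursion, and then resolve it (you by explicit induction, the paper by unrolling directly). The sign and index bookkeeping in your recursions for $p_{j+1}^{M,M'}$ and $\eta_{j+1}(M,M')$ checks out.
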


\begin{proof}
Keeping the notation from the beginning of this section, there is a
short exact sequence
$$
0 \to \Omega^j(M) \to P_{j-1} \to \Omega^{j-1}(M) \to 0
$$
for each $j \ge 1$.
Applying $\Hom_A(-,M')$ gives an exact sequence
\begin{multline*}
0 \to \Hom_A(\Omega^{j-1}(M),M') \to \Hom_A(P_{j-1},M') \to
\Hom_A(\Omega^j(M),M')\\
\to \Ext_A^1(\Omega^{j-1}(M),M')
\to 0.
\end{multline*}
Keeping in mind that $\Ext_A^1(\Omega^{j-1}(M),M') \cong
\Ext_A^j(M,M')$ we get the formula
$$
\hom_A(\Omega^j(M),M') 
= -\hom_A(\Omega^{j-1}(M),M') + \hom_A(P_{j-1},M') 
+ \ext_A^j(M,M').
$$
Resolving this recursion we get
\begin{align*}
\hom_A(\Omega^j(M),M') &= 
\left(\sum_{i=0}^{j-1} (-1)^i \hom_A(P_{j-1-i},M')\right)
+ (-1)^j\eta_j(M,M')
\\
&= p_j^{M,M'} + (-1)^j\eta_j(M,M').
\end{align*}
\end{proof}

The next lemma is well known.

\begin{Lem}\label{lem:projmult1}
For $1 \le i \le n(A)$ and $j \ge 0$ we have
$$
\ext_A^j(M,S(i)) = \hom_A(P_j,S(i)) = [P_j:P(i)].
$$
\end{Lem}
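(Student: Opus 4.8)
The claim to prove is Lemma~\ref{lem:projmult1}: for $1 \le i \le n(A)$ and $j \ge 0$,
$$
\ext_A^j(M,S(i)) = \hom_A(P_j,S(i)) = [P_j:P(i)].
$$

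Here's my plan.

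The plan is to exploit the fact that $S(i)$ is simple, so the differentials in the minimal projective resolution vanish after applying $\Hom_A(-,S(i))$. First I would recall that a projective resolution $\cdots \to P_1 \to P_0 \to M \to 0$ is \emph{minimal} precisely when each differential $p_j : P_j \to P_{j-1}$ satisfies $\Ima(p_j) \subseteq \rad(P_{j-1})$, equivalently $p_j$ is a radical map (and $p_0$ is a projective cover). Applying $\Hom_A(-,S(i))$ to the resolution, the induced map $\Hom_A(p_j, S(i)) : \Hom_A(P_{j-1},S(i)) \to \Hom_A(P_j, S(i))$ is zero: any $\varphi : P_{j-1} \to S(i)$ precomposed with $p_j$ factors through $\rad(P_{j-1})$, and since $S(i)$ is simple, $\varphi(\rad(P_{j-1})) = 0$ (because $\varphi$ either is zero or is surjective with kernel containing $\rad(P_{j-1})$, as $\rad(S(i)) = 0$). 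Hence $\Hom_A(p_j,S(i)) = 0$ for all $j \ge 1$. Therefore the complex $\Hom_A(P_\bullet, S(i))$ has all differentials zero, so its cohomology in degree $j$ is just $\Hom_A(P_j, S(i))$ itself; that is, $\Ext_A^j(M,S(i)) \cong \Hom_A(P_j, S(i))$, giving the first equality $\ext_A^j(M,S(i)) = \hom_A(P_j,S(i))$.

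For the second equality, I would write $P_j \cong P(1)^{a_1} \oplus \cdots \oplus P(n)^{a_n}$ with $a_k = [P_j : P(k)]$, so that $\hom_A(P_j, S(i)) = \sum_{k=1}^n a_k \hom_A(P(k), S(i))$. It then suffices to observe $\hom_A(P(k), S(i)) = \delta_{ki}$: a homomorphism $P(k) \to S(i)$ factors through $\tp(P(k)) \cong S(k)$, and $\Hom_A(S(k), S(i))$ is $K$ if $k = i$ and $0$ otherwise by Schur's lemma (over the algebraically closed field $K$). Hence $\hom_A(P_j, S(i)) = a_i = [P_j:P(i)]$, completing the proof.

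I don't expect any genuine obstacle here; the lemma is indeed standard. The only point requiring a small amount of care is the justification that $\Hom_A(p_j, S(i)) = 0$, i.e. that minimality of the resolution forces each differential to be a radical map, and that radical maps die after mapping into a simple module. One should also remember to handle $j = 0$ separately (or note it is covered uniformly): there $\Hom_A(P_0, S(i)) = \Hom_A(M,S(i))$ since $p_0$ is an epimorphism with kernel in $\rad(P_0)$, so again $\ext_A^0(M,S(i)) = \hom_A(M,S(i)) = \hom_A(P_0,S(i)) = [P_0:P(i)]$, matching the statement for $j=0$. I would keep the write-up short, citing the standard characterization of minimal projective resolutions (e.g.\ from \cite{ASS06} or \cite{ARS97}) rather than reproving it.
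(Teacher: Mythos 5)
Your proof is correct. The paper states this lemma without proof (``The next lemma is well known.''), and your argument --- minimality forces $\Ima(p_j)\subseteq\rad(P_{j-1})$, so all differentials of $\Hom_A(P_\bullet,S(i))$ vanish, combined with $\hom_A(P(k),S(i))=\delta_{ki}$ --- is exactly the standard justification the authors are implicitly invoking, including the correct handling of the $j=0$ case.
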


\begin{Lem}\label{lem:projmult2}
For $j \ge 1$ we have
$$
p_j^{M,M'}
= \sum_{i=0}^{n(A)}
(-1)^{j-1}\eta_{j-1}(M,S(i)) [M':S(i)].
$$
\end{Lem}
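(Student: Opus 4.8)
The plan is to prove the identity $p_j^{M,M'} = \sum_{i=0}^{n(A)} (-1)^{j-1}\eta_{j-1}(M,S(i))\,[M':S(i)]$ by expanding the left-hand side term by term and recognizing each term as a truncated Euler characteristic against a simple module. Concretely, I would start from the definition
$$
p_j^{M,M'} = \sum_{k=0}^{j-1} (-1)^k \hom_A(P_{j-1-k},M').
$$
The key input is Lemma~\ref{lem:projmult1}, which gives $\hom_A(P_\ell,S(i)) = [P_\ell:P(i)]$, together with the elementary fact that $\hom_A(P_\ell,M') = \sum_{i=1}^{n(A)} [P_\ell:P(i)]\,[M':S(i)]$ since $P_\ell$ is a direct sum of indecomposable projectives and $\hom_A(P(i),M') = [M':S(i)]$. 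Substituting this into the sum and interchanging the order of summation, the plan is to write
$$
p_j^{M,M'} = \sum_{i=1}^{n(A)} \left( \sum_{k=0}^{j-1} (-1)^k [P_{j-1-k}:P(i)] \right)[M':S(i)].
$$

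Next I would identify the inner coefficient. Using Lemma~\ref{lem:projmult1} again in the form $[P_\ell : P(i)] = \ext_A^\ell(M,S(i))$, the inner sum becomes $\sum_{k=0}^{j-1} (-1)^k \ext_A^{j-1-k}(M,S(i))$. Reindexing via $\ell = j-1-k$ turns this into $\sum_{\ell=0}^{j-1} (-1)^{j-1-\ell} \ext_A^\ell(M,S(i)) = (-1)^{j-1}\sum_{\ell=0}^{j-1} (-1)^\ell \ext_A^\ell(M,S(i)) = (-1)^{j-1}\eta_{j-1}(M,S(i))$, which is exactly what the statement claims. (The index $i=0$ in the stated sum contributes nothing, since there is no simple module $S(0)$; I would either treat the sum as running from $1$ to $n(A)$ or note that the $i=0$ term is vacuous/zero, matching the paper's indexing convention.)

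I do not expect any serious obstacle here: the proof is essentially bookkeeping, combining the definition of $p_j^{M,M'}$, the standard decomposition of $\hom_A(P_\ell,M')$ into Jordan--H\"older multiplicities weighted by projective multiplicities, and Lemma~\ref{lem:projmult1}. The only point requiring a little care is the sign reindexing in the inner sum and making sure the factor $(-1)^{j-1}$ comes out correctly; this is the same kind of recursion bookkeeping already done in the proof of Lemma~\ref{lem:projmult4}. One should also remember that $P_j$ here denotes the $j$-th term of a \emph{minimal} projective resolution of $M$, which is what makes $[P_j:P(i)] = \ext_A^j(M,S(i))$ valid, so the statement implicitly uses minimality — but this is already built into the setup of Section~\ref{subsec:omega}.
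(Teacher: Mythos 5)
Your proposal is correct and follows exactly the paper's (much terser) argument: decompose $\hom_A(P_{j-1-k},M')$ via $\hom_A(P(i),M')=[M':S(i)]$, apply Lemma~\ref{lem:projmult1}, and reindex the alternating sum to recognize $(-1)^{j-1}\eta_{j-1}(M,S(i))$. Your remarks about the vacuous $i=0$ term and the role of minimality of the resolution in Lemma~\ref{lem:projmult1} are also accurate.
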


\begin{proof}
Recall that $\hom_A(P(i),M') = [M':S(i)]$. Now
the claim follows from Lemma~\ref{lem:projmult1}.
\end{proof}

\begin{Cor}\label{cor:projmult3}
The map
$$
p_j^{-,?}\df \md(A,d) \times \md(A,d') \mapsto \Z,\quad
(M,M') \mapsto p_j^{M,M'}
$$
is upper semicontinuous.
\end{Cor}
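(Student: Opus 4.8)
The plan is to deduce this corollary directly from the explicit formula for $p_j^{M,M'}$ established in Lemma~\ref{lem:projmult2}, combined with the semicontinuity results already proved. Recall that Lemma~\ref{lem:projmult2} gives
$$
p_j^{M,M'} = \sum_{i=0}^{n(A)} (-1)^{j-1}\eta_{j-1}(M,S(i))\,[M':S(i)]
$$
for $j \ge 1$, while $p_0^{M,M'} = 0$ is trivially constant (hence both upper and lower semicontinuous), so we may assume $j \ge 1$.

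The key observation is that for each fixed simple module $S(i)$, the multiplicity $[M':S(i)]$ is a \emph{constant} on the connected component $\md(A,d')$, since it equals $\dimv_i(M')$ which is fixed on each $\md(A,\bd')$; by Lemma~\ref{lem:semicont3} it suffices to prove semicontinuity on each connected component of the domain, so we may treat $[M':S(i)] =: m_i \ge 0$ as a nonnegative constant. Thus, as a function of $(M,M')$ on such a component, $p_j^{M,M'}$ equals $\sum_{i} (-1)^{j-1} m_i\,\eta_{j-1}(M,S(i))$, which does not actually depend on $M'$ at all and is a nonnegative-integer linear combination of the maps $M \mapsto \eta_{j-1}(M,S(i))$.

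Now I invoke Theorem~\ref{thm:main1}: the map $\eta_{j-1}(-,?)$ is upper semicontinuous when $j-1$ is even and lower semicontinuous when $j-1$ is odd; by Lemma~\ref{lem:semicont1} the same holds for $M \mapsto \eta_{j-1}(M,S(i))$. Multiplying by $(-1)^{j-1}$ flips the type precisely when $j-1$ is odd, so $(-1)^{j-1}\eta_{j-1}(-,S(i))$ is upper semicontinuous in all cases. Since a nonnegative linear combination of upper semicontinuous integer-valued maps is again upper semicontinuous (the sublevel sets being intersections of sublevel sets, hence open), and composing with the projection $\md(A,d)\times\md(A,d')\to\md(A,d)$ preserves semicontinuity, we conclude that $p_j^{-,?}$ is upper semicontinuous on each connected component, and hence on the whole variety by Lemma~\ref{lem:semicont3}.

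I do not expect any genuine obstacle here: the content is entirely in Lemma~\ref{lem:projmult2} and Theorem~\ref{thm:main1}, and what remains is the bookkeeping of signs together with the elementary closure properties of semicontinuous maps. The one point worth stating carefully is the sign flip: one should note explicitly that $(-1)^{j-1}$ times a lower semicontinuous map is upper semicontinuous, so that the parity of $j$ does not cause trouble. The reduction to connected components (where the multiplicities $[M':S(i)]$ become constants) is the small trick that makes the argument clean.
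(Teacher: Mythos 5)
Your proof is correct and takes essentially the same route as the paper, whose own proof is just the one-line instruction to combine Lemma~\ref{lem:semicont3}, Lemma~\ref{lem:projmult2} and Theorem~\ref{thm:main1}: restrict to connected components so that each $[M':S(i)]$ is a nonnegative constant, and observe that $(-1)^{j-1}\eta_{j-1}(-,S(i))$ is upper semicontinuous for every parity of $j$. (One tiny quibble: the sublevel set $\{f+g\le n\}$ of a sum of upper semicontinuous maps is a \emph{union} of intersections of sublevel sets rather than a single intersection, but it is open all the same, so your conclusion stands.)
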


\begin{proof}
Combine Lemmas~\ref{lem:semicont3}, \ref{lem:projmult4}, \ref{lem:projmult2}
with Theorem~\ref{thm:main1}.
\end{proof}

\begin{proof}[Proof of Corollary~\ref{cor1:main1}]
Combining
Lemma~\ref{lem:projmult4} with Corollary~\ref{cor:projmult3} and
Theorem~\ref{thm:main1}
we get that the map
$$
\hom_A(\Omega^j(-),?)\df \md(A,d) \times \md(A,d') \to \Z,\quad
(M,M') \mapsto \hom_A(\Omega^j(M),M')
$$
is upper semicontinuous.
\end{proof}

\subsection{Upper semicontinuity of  $g$-vectors and 
$E$-invariants}\label{subsec:gvectorsproof}
We prove Corollaries~\ref{cor2:main1} and \ref{cor3:main1}
(which correspond to the following two propositions).

\begin{Prop}\label{prop:gvectorusc}
For $1 \le i \le n(A)$ the map
$g_i(-)$ is upper semicontinuous.
\end{Prop}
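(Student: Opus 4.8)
The plan is to recall the definition $g_i(M) = -\hom_A(M,S(i)) + \ext_A^1(M,S(i))$ and recognize that this is a signed sum very close to a truncated Euler map evaluated against the simple module $S(i)$. Indeed, $\eta_1(M,S(i)) = \hom_A(M,S(i)) - \ext_A^1(M,S(i)) = -g_i(M)$, so that $g_i(M) = -\eta_1(M,S(i))$.

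First I would invoke Theorem~\ref{thm:main1}: the truncated Euler map $\eta_1(-,?)\colon \md(A,d)\times\md(A,d')\to\Z$ is lower semicontinuous, since $t=1$ is odd. Next I would fix the second argument to be a fixed representative of the simple module $S(i)$ (living in $\md(A,d_{S(i)})$ inside the appropriate $\md(A,d')$), and apply Lemma~\ref{lem:semicont1} to conclude that the map $M \mapsto \eta_1(M,S(i))$ on $\md(A,d)$ is lower semicontinuous. Finally, since $g_i(-) = -\eta_1(-,S(i))$ and negating a lower semicontinuous map yields an upper semicontinuous one, the map $g_i(-)\colon \md(A,d)\to\Z$, $M\mapsto g_i(M)$, is upper semicontinuous, as claimed.

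There is essentially no obstacle here; the only point requiring a word of care is that $S(i)$ need not have $K$-dimension $1$ (only $d_{S(i)}$), so one must be slightly careful about \emph{which} variety $\md(A,d')$ the fixed point $S(i)$ sits in, but any choice of a point in the orbit $\cO_{S(i)} \subseteq \md(A,d_{S(i)})$ works, since by Lemma~\ref{lem:semicont1} the restricted map is independent of that choice up to the semicontinuity conclusion. All the real content has already been established in Theorem~\ref{thm:main1}.

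Here is the proof.

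\begin{proof}[Proof of Proposition~\ref{prop:gvectorusc}]
Fix $1 \le i \le n(A)$, and let $d' := d_{S(i)}$, so that $S(i) \in \md(A,d')$.
For $M \in \md(A)$ we have
$$
\eta_1(M,S(i)) = \hom_A(M,S(i)) - \ext_A^1(M,S(i)) = -g_i(M).
$$
By Theorem~\ref{thm:main1} (with $t = 1$, which is odd) the map
$$
\eta_1(-,?)\df \md(A,d) \times \md(A,d') \to \Z,\quad
(M,M') \mapsto \eta_1(M,M')
$$
is lower semicontinuous.
By Lemma~\ref{lem:semicont1} the map
$$
\eta_1(-,S(i))\df \md(A,d) \to \Z,\quad
M \mapsto \eta_1(M,S(i))
$$
is lower semicontinuous.
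Since $g_i(-) = -\eta_1(-,S(i))$, and a map is upper semicontinuous if and only if its negative is lower semicontinuous, the map
$g_i(-)\df \md(A,d) \to \Z$ is upper semicontinuous.
\end{proof}
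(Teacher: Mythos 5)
Your proof is correct and follows exactly the same route as the paper: identify $g_i(-) = -\eta_1(-,S(i))$, invoke Theorem~\ref{thm:main1} for the lower semicontinuity of $\eta_1(-,?)$ with $t=1$ odd, restrict the second argument to $S(i)$ via Lemma~\ref{lem:semicont1}, and negate. No issues.
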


\begin{proof}
By Theorem~\ref{thm:main1}, the truncated Euler map
\begin{align*}
\eta_1(-,?)\df \md(A,d) \times \md(A,d') &\to \Z
\\
(M,M') &\mapsto \hom_A(M,M') - \ext_A^1(M,M')
\end{align*}
is lower semicontinuous.
Using Lemma~\ref{lem:semicont1} we get that
$-\eta_1(-,S(i))$ is upper semicontinuous.
Since 
$$
g_i(M) = - \hom_A(M,S(i)) + \ext_A^1(M,S(i)) = 
-\eta_1(M,S(i)),
$$
the result follows.
\end{proof}

\begin{Prop}\label{prop:Einvariantusc}
$E(-,?)$ is upper semicontinuous.
\end{Prop}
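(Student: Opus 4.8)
The plan is to reduce the statement about $E(-,?)$ to the already-established upper semicontinuity of $\hom_A(-,?)$ together with the upper semicontinuity of the $g$-vector components proved in Proposition~\ref{prop:gvectorusc}. The key tool is the identity derived in Section~\ref{subsec:gvectors}, namely
\[
E(M,N) = \hom_A(M,N) + \sum_{i=1}^{n} g_i(M)[N:S(i)].
\]
On a fixed connected component $\md(A,\bd) \times \md(A,\bd')$ of $\md(A,d) \times \md(A,d')$, the dimension vector $\dimv(N) = \bd'$ is constant, so the multiplicities $[N:S(i)] = \dimv_i(\bd')$ are fixed non-negative integers. Thus on such a component $E(-,?)$ is a sum of $\hom_A(-,?)$ and a non-negative integer linear combination of the maps $g_i(-) \circ \mathrm{pr}_1$, each of which is upper semicontinuous (for $g_i$ by Proposition~\ref{prop:gvectorusc}, pulled back along the projection to the first factor, which preserves upper semicontinuity, e.g.\ by the same reasoning as Lemma~\ref{lem:semicont1}).

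The steps, in order: first I would invoke Lemma~\ref{lem:semicont3} to reduce to checking upper semicontinuity on each connected component $\md(A,\bd) \times \md(A,\bd')$ separately, where $\bd, \bd'$ range over the dimension vectors compatible with $d$, $d'$. Second, on such a component I would write down the displayed identity for $E(M,N)$, noting that the coefficients $[N:S(i)] = \dimv_i(\bd')$ are constant. Third, I would observe that a finite sum of upper semicontinuous $\Z$-valued maps with non-negative integer coefficients is again upper semicontinuous, and that pulling back an upper semicontinuous map along a morphism of varieties (here the projection $\md(A,\bd)\times\md(A,\bd') \to \md(A,\bd)$) stays upper semicontinuous. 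Combining these with the already-proved upper semicontinuity of $\hom_A(-,?)$ (the Corollary after Lemma~\ref{lem:semicont5}) gives the claim for $E(-,?)$ on each component, and hence globally. The statement for $E(-)$ then follows immediately from Lemma~\ref{lem:semicont2}, though that is the content of Corollary~\ref{cor3:main1} rather than this proposition.

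I do not expect any genuine obstacle here: the real work was done in establishing the formula for $E(M,N)$ in terms of $g$-vectors and Hom-dimensions, and in proving Proposition~\ref{prop:gvectorusc}. The only mild subtlety is the bookkeeping with connected components — one must pass to $\md(A,\bd)\times\md(A,\bd')$ so that the Jordan--Hölder multiplicities $[N:S(i)]$ become constants rather than functions of $N$; on the full variety $\md(A,d')$ these multiplicities are not constant, but they are locally constant, which is exactly what Lemma~\ref{lem:semicont3} lets us exploit. A clean way to phrase the closure of upper semicontinuous maps under non-negative linear combinations is: if $\eta, \theta\colon X \to \Z$ are upper semicontinuous and $a,b \in \N$, then $a\eta + b\theta$ is upper semicontinuous, since $\{a\eta+b\theta \le m\} = \bigcup_{ap+bq \le m}\big(\{\eta \le p\}\cap\{\theta \le q\}\big)$ is open (or, more slickly, intersecting the relevant open sets). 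That is the entire argument.
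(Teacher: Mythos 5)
Your proof is correct and follows exactly the paper's route: the identity $E(M,N)=\hom_A(M,N)+\sum_{i=1}^{n}g_i(M)[N{:}S(i)]$ combined with the upper semicontinuity of $\hom_A(-,?)$ and of the $g_i(-)$. You merely make explicit the details the paper leaves tacit (restricting to connected components so that the non-negative coefficients $[N{:}S(i)]$ are constant, and closure of upper semicontinuity under non-negative integer combinations), which is a faithful filling-in rather than a different argument.
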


\begin{proof}
We know that 
\[
E(M,N) = \hom_A(M,N) + \sum_{i=1}^{n(A)} g_i(M)[N:S(i)]
\]
for $M,N \in \md(A)$.
The upper semicontinuity of $\hom_A(-,?)$ and
$g_i(-)$ for all $1 \le i \le n(A)$ implies that $E(-,?)$ is
upper semicontinuous.
\end{proof}

\subsection{Inequalities for $\hom_A(-,?)$, $\ext_A^1(-,?)$ and
$E(-,?)$}\label{subsec:inequalities}
This section contains the proofs of
Theorem~\ref{thm:main2} and
Corollary~\ref{cor1:main2}.

For a constructible subset $U$ of a variety $X$ let
${\rm codim}_X(U)$ be the codimension of $U$ in $X$.

\begin{Lem}\label{lem:diagonaldim}
For $Z \in \irr(A)$ we have
$$
{\rm codim}_{Z \times Z}\left(\{ (M,M') \in Z \times Z \mid 
M \cong M' \}\right)
= c(Z).
$$
\end{Lem}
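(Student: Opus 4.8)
The plan is to relate the ``diagonal'' locus $\Delta_Z := \{ (M,M') \in Z \times Z \mid M \cong M' \}$ to the orbit map and compute its dimension directly. First I would parametrize $\Delta_Z$ via the action of $G_d$: consider the morphism
\[
\mu\df G_d \times Z \to Z \times Z, \quad (g,M) \mapsto (M, gM).
\]
The image of $\mu$ is precisely $\Delta_Z$, since $(M,M') \in \Delta_Z$ if and only if $M'$ lies in the orbit $\cO_M$, i.e. $M' = gM$ for some $g \in G_d$. Thus $\Delta_Z$ is a constructible subset of $Z \times Z$, and by Chevalley's theorem on fibre dimension its dimension equals $\dim(G_d \times Z)$ minus the generic fibre dimension of $\mu$. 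So the computation reduces to two things: the generic fibre dimension of $\mu$, and then a codimension count inside $Z \times Z$.

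Next I would compute the generic fibre of $\mu$. Over a point $(M, gM) \in \Delta_Z$, the fibre $\mu^{-1}(M,gM)$ consists of all pairs $(h, M')$ with $M' = M$ (forced by the first coordinate) and $hM = gM$, i.e. $h \in g \cdot \Stab_{G_d}(M)$, which is a coset of the stabilizer. Hence every nonempty fibre over $\Delta_Z$ has dimension $\dim \Stab_{G_d}(M) = \dim \End_A(M) = \dim G_d - \dim \cO_M$. For $M$ ranging over a dense open subset of $Z$ this takes its generic (minimal) value, so the generic fibre dimension of $\mu$ is $\min\{ \dim \End_A(M) \mid M \in Z\} = \dim G_d - \max\{\dim \cO_M \mid M \in Z\}$. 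Therefore
\[
\dim \Delta_Z = \dim G_d + \dim Z - \bigl(\dim G_d - \max_{M \in Z}\dim \cO_M\bigr) = \dim Z + \max_{M \in Z} \dim \cO_M.
\]
Here one must be slightly careful: $Z$ is irreducible, so $G_d \times Z$ is irreducible, $\mu$ is dominant onto $\overline{\Delta_Z}$, and $\dim \overline{\Delta_Z} = \dim \Delta_Z$; the generic fibre dimension statement applies over a dense open subset of the image, and I would note that the open subset of $Z$ on which $\dim \cO_M$ is maximal pulls back to a set meeting this locus, so the two ``generic'' conditions are compatible.

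Finally I would read off the codimension in $Z \times Z$. We have $\dim(Z \times Z) = 2\dim Z$, so
\[
{\rm codim}_{Z \times Z}(\Delta_Z) = 2\dim Z - \dim \Delta_Z = \dim Z - \max_{M \in Z}\dim \cO_M = \min_{M \in Z}\bigl(\dim Z - \dim \cO_M\bigr) = c(Z),
\]
using the definition $c(Z) = \min\{\dim(Z) - \dim\cO_M \mid M \in Z\}$ and the fact that $\dim Z - \dim \cO_M$ attains its minimum exactly where $\dim \cO_M$ is maximal. The main obstacle, and the only point requiring genuine care, is the interplay of the two genericity conditions in the fibre-dimension argument — namely ensuring that the dense open subset of $G_d \times Z$ over which $\mu$ has minimal fibre dimension is genuinely governed by the locus of modules of maximal orbit dimension, rather than some smaller orbit dimension occurring on a dense subset (which cannot happen, but should be justified via upper semicontinuity of $\dim\End_A(-)$ on $Z$). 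Everything else is a routine dimension count.
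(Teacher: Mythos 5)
Your proposal is correct and follows essentially the same route as the paper: both parametrize the locus $\{(M,M')\mid M\cong M'\}$ by the morphism $(g,M)\mapsto (M,gM)$ from $G_d\times Z$, identify the fibre over $(M,gM)$ as a coset of the stabilizer of dimension $\dim\End_A(M)$, and apply Chevalley's fibre-dimension theorem at a generic point where $\dim Z-\dim\cO_M=c(Z)$. Your extra remark about reconciling the two genericity conditions via semicontinuity of $\dim\End_A(-)$ is a point the paper passes over silently, but it is handled correctly.
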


\begin{proof}
Recall that 
$$
c(Z) = \min\{ \dim(Z) - \dim \cO_M \mid M \in Z \}
$$
and that $\dim \cO_M = \dim G_d - \dim \End_A(M)$.
Let
\begin{align*}
f\df Z \times G_d &\to Z \times Z
\\
(M,g) &\mapsto (M,gM).
\end{align*}
This is a morphism of affine varieties with
$\Ima(f) = \{ (M,M') \in Z \times Z \mid M \cong M' \}$.
Clearly, $\Ima(f)$ is irreducible, and
$\{ (M,M') \in \Ima(f) \mid \dim(Z) - \dim \cO_M = c(Z) \}$ is
a dense open subset of $\Ima(f)$.
For $(M,M')$ in this subset, Chevalley's Theorem says that
$$
\dim \Ima(f) =
\dim(Z \times G_d) - \dim f^{-1}(M,M'). 
$$
We have $\dim f^{-1}(M,M') \cong {\rm Aut}_A(M)$.
We get
$$
\dim \Ima(f) = \dim(Z) + \dim G_d - \dim \End_A(M)
= \dim(Z) + \dim \cO_M.
$$
Thus ${\rm codim}_{Z \times Z}(\Ima(f)) = 2\dim(Z) - \dim(Z) 
- \dim \cO_M = c(Z)$.
\end{proof}

\begin{proof}[Proof of Theorem~\ref{thm:main2}]
(i) $\implies$ (ii):
Assume that $\hom_A(Z,Z) < \eend_A(Z)$.
We choose $(M,M') \in Z \times Z$ such that
\begin{align*}
\hom_A(M,M') &= \hom_A(Z,Z), & \ext_A^1(M,M') &= \ext_A^1(Z,Z), 
\\
\eend_A(M) &= \eend_A(Z), &\ext^1_A(M) &= \ext_A^1(Z).
\end{align*}
The short exact sequence
$$
0 \to \Omega(M) \to P_0 \to M \to 0
$$
yields equalities
\begin{align*}
\ext_A^1(M,M') &= 
\hom_A(M,M') - \hom_A(P_0,M') + \hom_A(\Omega(M),M'),
\\
\ext_A^1(M) &= 
\eend_A(M) - \hom_A(P_0,M) + \hom_A(\Omega(M),M).
\end{align*}
The map $\hom_A(P_0,-)$ is constant on $Z$.
By the upper semicontinuity of the map $\hom_A(\Omega(-),?)$ 
(see Corollary~\ref{cor1:main1}) we can assume that
$\hom_A(\Omega(M),M') \le \hom_A(\Omega(M),M)$.
Now $\hom_A(Z,Z) < \eend_A(Z)$ implies $\ext_A^1(Z,Z) < \ext_A^1(Z)$.

(i) $\implies$ (iv),
(ii) $\implies$ (iv) and (iii) $\implies$ (iv):
Assume that $Z \in \irr(A)$ contains a dense orbit, i.e. $Z = \overline{\cO_M}$
for some $M \in Z$.
Then $\cO_M$ and $\cO_M \times \cO_M$ are dense open subsets of $Z$ and
$Z \times Z$, respectively. 
We get $\ext_A^i(Z,Z) = \ext_A^i(Z)$ for all $i \ge 0$ 
and $E(Z,Z) = E(Z)$.

(iv) $\implies$ (i):
Assume that $Z \in \irr(A,d)$ does not contain a dense orbit.
This implies $c(Z) \ge 1$.
We want to show that $\hom_A(Z,Z) < \eend_A(Z)$.

Let 
$$
U := \{ (M,M',f) \in Z \times Z \times \Hom_K(K^d,K^d) \mid
f \in \Hom_A(M,M') \}.
$$
This is a closed subset of
$Z \times Z \times \Hom_K(K^d,K^d)$.
For $i \ge 0$ let
$$
U_i := \{ (M,M',f) \in U \mid \hom_A(M,M') = i \}.
$$
This is a locally closed subset of $U$.
Let $\pi_i'\df U_i \to Z \times Z$ be the obvious projection,
and define $V_i := \Ima(\pi_i')$.
We get a surjective map $\pi_i\df U_i \to V_i$.
By \cite[Lemma~2.1]{B96} the map
$\pi_i\df U_i \to V_i$ is a vector bundle.
In particular, $\pi_i$ maps open subsets of $U_i$ to open subsets
of $V_i$.

Let $i_0 := \hom_A(Z,Z)$.
Then $V_{i_0}$ is a dense open subset of $Z \times Z$ and
therefore irreducible.

Let $M \in Z$ such that $\dim \End_A(M) = \eend_A(Z)$.
We know already that $i_0 = \hom_A(Z,Z) \le \eend_A(Z)$.

Let
$$
U_{i_0}^\circ := \{ (M,M',f) \in U_{i_0} \mid f \text{ is not an isomorphism} \}.
$$
This is a closed subset of $U_{i_0}$.
Thus $W_{i_0} := U_{i_0} \setminus U_{i_0}^\circ$ is open
in $U_{i_0}$.

Suppose that
$W_{i_0} \not= \varnothing$.
Then $\pi_{i_0}(W_{i_0})$ 
is a dense open subset of $V_{i_0}$ and therefore of 
$Z \times Z$.

Note that for all $(M,M') \in \pi_{i_0}(W_{i_0})$ we have $M \cong M'$.
This is a contradiction, since by Lemma~\ref{lem:diagonaldim}
we have
$$
{\rm codim}_{Z \times Z}(\{ (M,M') \in Z \times Z \mid
M \cong M'\}) = c(Z) \ge 1.
$$
Thus we proved that $U_{i_0}^\circ = U_{i_0}$.
In particular, if $(M,M') \in Z \times Z$ such that
$\hom_A(M,M') = i_0$, then $M \not\cong M'$.
This implies
$\hom_A(Z,Z) < \eend_A(Z)$.

(iv) $\implies$ (iii):
Let $Z \in \irr(A)$.
Assume that $Z$ does not contain a dense orbit.
We know already that (iv) implies (i), i.e. 
$\hom_A(Z,Z) < \eend_A(Z)$.
Now the two equations
\begin{align*}
E(Z,Z) &= \hom_A(Z,Z) + \sum_{i=1}^{n(A)} g_i(Z)\dimv_i(Z),
\\
E(Z) &= \eend_A(Z) + \sum_{i=1}^{n(A)} g_i(Z)\dimv_i(Z).
\end{align*}
imply that
$E(Z,Z) < E(Z)$.

This finishes the proof of Theorem~\ref{thm:main2}.
\end{proof}

\begin{proof}[Proof of Corollary~\ref{cor1:main2}]
Let $Z \in \irr(A)$ be a brick component.
By definition
we have $\eend_A(Z) = 1$.
If $Z$ does not have a dense orbit, Theorem~\ref{thm:main2}
implies that $\hom_A(Z,Z) = 0$.
\end{proof}

\subsection{Generically $\tau$-reduced components for tame algebras}\label{subsec:tame}
We prove Corollary~\ref{cor2:main2}.

For $Z \in \irr(A)$ we have $c(Z) = 0$ if and only if $Z$ contains
a dense orbit.
Furthermore, we have $c(Z) = E(Z) = 0$ if and only if
$Z$ contains a $\tau$-rigid module $M$.
In this case,
$\cO_M$ is dense in $Z$, and we have
$E(Z,Z) = 0$.

From now on assume that $A$ is a tame algebra, and let $Z \in \irr(A)$ be indecomposable.
It follows that $c(Z) \le 1$, see for example \cite[Section~2.2]{CC15}.
In this situation,
the following are equivalent:
\begin{itemize}\itemsep2mm

\item[(i)]
$c(Z) = E(Z) = 1$;

\item[(ii)]
$Z$ is a brick component which does not contain a dense orbit.

\end{itemize}
For more details we refer to \cite[Section~3]{GLFS22}.
In this case,
Theorem~\ref{thm:main2}
says that $E(Z,Z) = 0$.
Now Theorem~\ref{thm:decomp2} implies Corollary~\ref{cor2:main2}.


\section{Examples}\label{sec:examples}


\subsection{}
Let $Z \in \irr(A)$.
For $i \ge 2$ 
the inequalities $\ext_A^i(Z,Z) \le \ext_A^i(Z)$ are in general not strict.
As a trivial example, if
$\gldim(A) = t$,
then for all $i \ge t+1$ and $Z \in \irr(A)$ we have 
$\ext_A^i(Z,Z) = \ext_A^i(Z) = 0$.

\subsection{}
Let $A = KQ/I$ where $Q$ is the quiver
$$
\xymatrix{
1 \ar@(ul,dl)[]_a\ar@(ur,dr)^b
}
$$
and $I$ is generated by $\{ ab -ba,\, a^2,\, b^2 \}$.
(If ${\rm char}(K) = 2$, then this is the group algebra of the Kleinean four group.)
For $\lambda \in K^\times$ let $M_\lambda$ be the $2$-dimensional $A$-module
defined by
$$
M_\lambda\df A \to M_2(K),\quad a \mapsto \BM0&0\\\lambda&0\EM,\quad
b \mapsto \BM0&0\\1&0\EM.
$$
We have
\begin{align*}
\hom_A(M_\lambda,M_\mu) &= 
\begin{cases}
2 & \text{if $\lambda = \mu$},
\\
1 & \text{otherwise},
\end{cases}
\\
\ext_A^1(M_\lambda,M_\mu) &= 
\begin{cases}
1 & \text{if $\lambda = \mu$},
\\
0 & \text{otherwise}.
\end{cases}
\end{align*}
Furthermore, we have
$M_\lambda \cong M_\mu$ if and only if $\lambda = \mu$.
Some easy computations show that
$\tau(M_\lambda) \cong M_\lambda$ and 
$\Omega(M_\lambda) \cong M_{-\lambda^{-1}}$.
Thus $\Omega^2(M_\lambda) \cong M_\lambda$.
One easily checks that
$$
Z := \overline{\bigcup_{\lambda \in K^\times} \cO_{M_\lambda}} \in \irr(A,2).
$$
(In fact, we have $\md(A,2) = Z$.)
Note that $Z$ does not contain a dense orbit.
It follows that
$\hom_A(Z,Z) = 1$, $\eend_A(Z) = 2$,
$\ext_A^1(Z,Z) = 0$, $\ext_A^1(Z) = 1$, $E(Z,Z) = 1$ and $E(Z) = 2$.
For $i \ge 2$ we get $\ext_A^i(Z,Z) = 0$ and
$$
\ext_A^i(Z) = 
\begin{cases}
1 & \text{if $i$ is odd},
\\ 
0 & \text{if $i$ is even}.
\end{cases}
$$
Here we used that $\Ext_A^i(M_\lambda,M_\mu) \cong \Ext_A^1(\Omega^{i-1}(M_\lambda),M_\mu)$.

\subsection{}
The following example is due to Calvin Pfeifer \cite{Pf22}. 
For $n \ge 2$ 
let $A = KQ/I$ where $Q$ is the quiver
$$
\xymatrix{
1 \ar@(ul,dl)_a & 2 \ar@<-0.5ex>[l]_{b_1}\ar@<0.5ex>[l]^{b_2} \ar@(ur,dr)[]^c
}
$$
and $I$ is generated by $\{ a^n,\, c^n,\, ab_i - b_ic \mid i = 1,2 \}$. 
This is the algebra $H(C,D,\Omega)$ from \cite{GLS17} for
$$
C = \BM 2&-2\\-2&2\EM,\quad D = \BM n&\\&n\EM,\quad
\Omega = \{ (1,2) \}. 
$$
Let $\bd = (n,n)$.
The locally free $A$-modules (in the sense of \cite{GLS17}) in $\md(A,\bd)$ form a non-empty open irreducible subset of $\md(A,2n)$. 
Let $Z \in \irr(A,2n)$ be the closure of this subset.
Then $Z$ is indecomposable, 
$\hom_A(Z,Z) = \ext_A^1(Z,Z) = E(Z,Z) = 0$ and 
$\eend_A(Z) = \ext_A^1(Z) = E(Z) = n$.

\subsection{}
Let $A = KQ$ be a finite-dimensional path algebra.
Then the irreducible components of $\md(A,d)$ are just the connected 
components of $\md(A,d)$.
Furthermore, each indecomposable $Z \in \irr(A)$ is a brick component,
see \cite[Proposition~1]{K82}.
Let $Q_0 = \{ 1,\ldots,n \}$ be the set of vertices of $Q$.
Thus $n = n(A)$.
Let $\bd = (d_1,\ldots,d_n)$ be an isotropic Schur root in the sense of
\cite{S92}, and let $d = d_1 + \cdots + d_n$.
 Let $Z = \md(A,\bd) \in \irr(A,d)$.
Then $\hom_A(Z,Z) = \ext_A^1(Z,Z) = E(Z,Z) = 0$ and 
$\eend_A(Z) = \ext_A^1(Z) = E(Z) = 1$.
If we assume additionally that $Q$ is connected and wild, then for all bricks $M \in Z$ we have $\tau(M) \not\cong M$. 
As a concrete example, let $Q$ be the quiver
$$
\xymatrix{
1 & \ar@<0.5ex>[l]\ar@<-0.5ex>[l] 2 & 3 \ar[l]
}
$$
and let $\bd = (1,2,1)$.
Then $Q$ is connected and wild, and $\bd$ is an isotropic Schur root.


\end{document}